\theoremstyle{plain}
\theoremstyle{definition}
\newtheorem{theorem}{Theorem}[section]
\newtheorem{lemma}[theorem]{Lemma}
\newtheorem{corollary}[theorem]{Corollary}
\newtheorem{note}[theorem]{Note}
\newtheorem{convention}[theorem]{Convention}
\newtheorem{remark}[theorem]{Remark}
\theoremstyle{remark}
\numberwithin{equation}{section}
\title{Distributional chaotic generalized shifts}
\author[Z. Nili Ahmadabadi, F. Ayatollah Zadeh Shirazi]{Zahra Nili Ahmadabadi, Fatemah Ayatollah Zadeh Shirazi}
\begin{document}
 \maketitle

\begin{abstract}
Suppose $X$ is a finite discrete space with at least two elements, $\Gamma$ is a nonempty countable set, and consider self--map
$\varphi:\Gamma\to\Gamma$. We prove that the generalized shift $\sigma_\varphi:X^\Gamma\to X^\Gamma$ with
$\sigma_\varphi((x_\alpha)_{\alpha\in\Gamma})=(x_{\varphi(\alpha)})_{\alpha\in\Gamma}$ (for
$(x_\alpha)_{\alpha\in\Gamma}\in X^\Gamma$) is:
\begin{itemize}
\item distributional chaotic (uniform, type 1, type 2) if and only if
	$\varphi:\Gamma\to\Gamma$ has at least a non-quasi-periodic point,
\item dense distributional chaotic if and only if
	$\varphi:\Gamma\to\Gamma$ does not have any periodic point,
\item transitive distributional chaotic if and only if
	$\varphi:\Gamma\to\Gamma$ is one--to--one without any periodic point.
\end{itemize}
We complete the text by counterexamples.
\end{abstract}

\

\noindent {\bf AMS Classification:} {54H20 }
\\
{\bf Keywords:} {Distributional chaotic, Generalized shift, Li-Yorke chaotic. } \vspace{1cm} \maketitle

\section{Introduction}
\noindent ``CHAOS'' is one of the most famous mathematical terms and concepts that has been studied in
dynamical systems. Regarding these studies one may consider different kinds of chaos like
Devaney chaos~\cite{devaneychaos2, devaneychaos}, Li--Yorke chaos~\cite{liyorkechaos},
topological chaos (i.e., nonzero topological entropy~\cite{walters}),
distributional chaos  etc.. One can study either general properties of chaotic maps
(apart from phase space or phase map), or special cases.
In this paper we deal with different types of distributional chaos in generalized shift dynamical systems.
In next section we bring a collection of preliminaries, in Section 3 we study
distributional chaotic generalized shifts of type 1 (and 2). In Sections 4 and 5 we study dense
distributional chaotic generalized shifts and transitive distributional chaotic generalized shifts.
Finally Section 6 is covered by counterexamples.
\section{Preliminaries}
\noindent By a topological dynamical system $(Y,g)$ we mean a compact metric space $Y$ and continuous map $g:Y\to Y$.
In dynamical system $(Y,g)$ we say $y\in Y$ is a transitive point if $\{g^n(y):n\geq0\}$ is a dense subset of $Y$ and in this case we say
$(Y,g)$ is point transitive. Moreover we say
$(Y,g)$ is topological transitive (or simply transitive) if for all opene (nonempty and open) subsets $U,V$ of $Y$ there exists $n\geq1$ with $g^n(U)\cap V\neq\varnothing$.
It's well-known that for compact perfect metric space $Y$, $(Y,g)$ is topological transitive if and only if it is point transitive
(\cite[Proposition 1.1]{silverman} and \cite{kolyada}). In dynamical system $(Y,g)$ we say nonempty subset $Z$ of $Y$
is invariant (or $g-$invariant) if $g(Z)\subseteq Z$.
\subsection*{Background on generalized shifts}
One-sided and two-sided shifts (resp.
\linebreak
$\mathop{\{1,\ldots,k\}^{\mathbb N}\to \{1,\ldots,k\}^{\mathbb N}}\limits_{(x_n)_{n\in{\mathbb N}}\mapsto(x_{n+1})_{n\in{\mathbb N}}}$ and
$\mathop{\{1,\ldots,k\}^{\mathbb Z}\to \{1,\ldots,k\}^{\mathbb Z}}\limits_{(x_n)_{n\in{\mathbb Z}}\mapsto(x_{n+1})_{n\in{\mathbb Z}}}$)
are one of the most applicable tools in dynamical systems and ergodic theory \cite{walters}. Generalized shifts
have been introduced for the first time in \cite{AHK}, where for nonempty set $\Lambda$, arbitrary set $Y$ with
at least two elements and self-map $\theta:\Lambda\to\Lambda$, we call $\sigma_\theta:Y^\Lambda\to Y^\Lambda$ with
$\sigma_\theta((x_\alpha)_{\alpha\in\Lambda})=(x_{\theta(\alpha)})_{\alpha\in\Lambda}$ (for
$(x_\alpha)_{\alpha\in\Lambda}\in Y^\Lambda$) a generalized shift. It's evident that if $Y$ is a topological space and
$Y^\Lambda$  equipped with product topology, then  $\sigma_\theta:Y^\Lambda\to Y^\Lambda$
is continuous, moreover if $Y$ has a group structure, then
$\sigma_\theta:Y^\Lambda\to Y^\Lambda$ is a group homomorphism too, so one may study both dynamical
and non-dynamical properties of generalized shifts \cite{dev,gio}.
\begin{convention} 
Henceforth suppose $X$ is a finite discrete set with at least two elements,
$\Gamma$ is a nonempty countable set, and $\varphi:\Gamma\to\Gamma$ is arbitrary. Consider $X^\Gamma$ with
product (pointwise convergence) topology and for $D\subseteq \Gamma$ let:
\[\gamma_D:=\{((x_\alpha)_{\alpha\in\Gamma},(y_\alpha)_{\alpha\in\Gamma})
\in X^\Gamma\times X^\Gamma:\forall\alpha\in D \:(x_\alpha=y_\alpha)\}\: .\]
\\
Under the above assumptions $X^\Gamma$ is compact Hausdorff, so it is unifomizable,
moreover $X^\Gamma$ is countable product of metrizable spaces, hence it is metrizable,
suppose $d$ is a compatible metric on $X^\Gamma$, for $\varepsilon>0$ let
\[\alpha_\varepsilon:=
\{(x,y)\in X^\Gamma\times X^\Gamma:d(x,y)<\varepsilon\}\:.\]
Both of the following sets are
compatible uniform structures on $X^\Gamma$:
\[\mathcal{F}_p=\{B\subseteq X^\Gamma\times X^\Gamma:\textrm{ there exists finite subset } D \textrm{ of }\Gamma \textrm{ with }\gamma_D\subseteq B\}\:,\]
\[\mathcal{F}_d=\{B\subseteq X^\Gamma\times X^\Gamma:\exists\varepsilon>0\:(\alpha_\varepsilon\subseteq B)\}\:.\]
Note to the fact that every compact Hausdorff space admits a unique compatible uniform structure, we have $\mathcal{F}_p=\mathcal{F}_d=:\mathcal{F}$.
For more details on uniform spaces see~\cite{dugundji, engelking}.
\end{convention}
\subsection*{Background on distributional chaotic dynamical systems}
In compact metric space $(Y,d)$ for continuous map $f:Y\to Y$,
$x,y\in Y$, $n\in\mathbb{N}$ and $t\in \mathbb R$ let (by $\#A$ we mean cardinality of $A$ for finite $A$ and $+\infty$ for infinite $A$):
\[\xi(x,y,t,n)=\#\left\{i\in\{0,\ldots,n-1\}:d(f^i(x),f^i(y))<t\right\}\]
and
\[F_{xy}(t)={\displaystyle\liminf_{n\to\infty}\frac{\xi(x,y,t,n)}{n}}\:,\:
F^*_{xy}(t)={\displaystyle\limsup_{n\to\infty}\frac{\xi(x,y,t,n)}{n}\:.}\]
We say $x,y\in Y$ are distributional scrambled of type 1, if
\begin{itemize}
\item $\exists s>0\: \: \: \: \:(F_{xy}(s)=0)$,
\item $\forall s>0\: \: \: \: \:(F^*_{xy}(s)=1)$.
\end{itemize}
We say $x,y\in Y$ are distributional scrambled of type 2, if
\begin{itemize}
\item $\exists s>0\: \: \: \: \:(F_{xy}(s)<1)$,
\item $\forall s>0\: \: \: \: \:(F^*_{xy}(s)=1)$.
\end{itemize}
We say $x,y\in Y$ are distributional scrambled of type 3, if there exist $b>a>0$ with
$F_{xy}(s)<F^*_{xy}(s)$ for all $s\in[a,b]$.
\\
We say $A (\subseteq Y)$  with at least two elements is a distributional scrambled subset of type $i$
of $Y$ if for all distinct $x,y\in A$, $x,y$ are distributional scrambled of type $i$. Also we say
$f:Y\to Y$ is DC$^ui$ (resp. DC$^\infty i$, DC$^2i$)
if $Y$ has an uncountable (resp. infinite, with at least two elements)
distributional scrambled subset of type $i$.
\\
If $(Y,f)$ is DC$^u$1 and $Y$ has an uncountable distributional
chaotic set of type 1 like $A$ such that there exists $\varepsilon>0$ with $F_{xy}(\varepsilon)=0$
for all distinct $x,y\in A$, then we say $(Y,f)$ is uniform distributional chaotic.
On the other hand if $(Y,f)$ is uniform distributional chaotic and $A(\subseteq Y)$ is dense
and satisfies all of the above conditions, then we say
$(Y,f)$ is dense distributional chaotic. If $(Y,f)$ is  uniform distributional chaotic
$A(\subseteq Y)$ is dense consisting of transitive points
and satisfies all of the above conditions, then we say
$(Y,f)$ is transitive distributional chaotic (see \cite{revis} too).
\begin{note}\label{salam10}
For $h\in\{u,\infty,2\}$, it is clear that if
$(Y,f)$ is DC$^h$1, then it is DC$^h$2. However using the fact that for all
$x,y\in Y$ and $s,t>0$ with $t<s$ we have $F_{xy}(t)\leq F_{xy}(s)$, so if $(Y,f)$
is DC$^h$2, then it is DC$^h$3.
\end{note}
\subsection*{Background on Li-Yorke chaotic dynamical systems}
In compact metric space $(Y,d)$ for continuous map $f:Y\to Y$ we say $x,y\in Y$
are Li-Yorke scrambled if $\mathop{\liminf}\limits_{n\to\infty}d(f^n(x),f^n(y))=0$ and
$\mathop{\limsup}\limits_{n\to\infty}d(f^n(x),f^n(y))>0$.
We say $A (\subseteq Y)$  with at least two elements is a Li-Yorke scrambled subset
of $Y$ if for all distinct $x,y\in A$, $x,y$ are Li-Yorke scrambled. Also we say
$f:Y\to Y$ is LY$_u$ chaotic (resp. LY$_{\infty}$ chaotic, LY$_2$ chaotic if $Y$ has
an uncountable (resp. infinite, with at least two elements) Li-Yorke scrambled subset \cite{khoob}.
\begin{remark}\label{salam20}
The generalized shift dynamical system $(X^\Gamma,\sigma_\varphi)$ is
 LY$_u$ (resp.  LY$_{\infty}$, LY$_2$) chaotic if and only if $\varphi:\Gamma\to\Gamma$
 has at least a non-quasi-periodic point \cite{yorke}.
\end{remark}
\section{DC1 and DC2 generalized shifts}
\noindent In this section we prove that the generalized shift dynamical system $(X^\Gamma,\sigma_\varphi)$ is
uniform distributional chaotic (resp. DC$^u$1, DC$^\infty$1, DC$^2$1, DC$^u$2, DC$^\infty$2, DC$^2$2) if and only if
$\varphi:\Gamma\to\Gamma$ has at least a non-quasi-periodic point.
\begin{lemma}\label{salam35}
For $x,y\in X^\Gamma$, $n\geq1$, $\alpha\in{\mathcal F}$ and $f:X^\Gamma\to X^\Gamma$ let:
\[\zeta(x,y,\alpha,n)=\#\left\{i\in\{0,\ldots,n-1\}:(f^i(x),f^i(y))\in\alpha\right\}\]
and
\[G_{xy}(\alpha)={\displaystyle\liminf_{n\to\infty}\frac{\zeta(x,y,\alpha,n)}{n}}\:,\:
G^*_{xy}(\alpha)={\displaystyle\limsup_{n\to\infty}\frac{\zeta(x,y,\alpha,n)}{n}\:.}\]
Now we have:
\begin{itemize}
\item the following statements are equivalent:
	\begin{itemize}
	\item[1.] there exists $s>0$ with $F_{xy}(s)=0$,
	\item[2.] there exists $\alpha\in{\mathcal F}$ with $G_{xy}(\alpha)=0$,
	\item[3.] there exists finite subset $D$ of $\Gamma$ with $G_{xy}(\gamma_D)=0$,
	\end{itemize}
\item the following statements are equivalent:
	\begin{itemize}
	\item[4.] there exists $s>0$ with $F_{xy}(s)<1$,
	\item[5.] there exists $\alpha\in{\mathcal F}$ with $G_{xy}(\alpha)<1$,
	\item[6.] there exists finite subset $D$ of $\Gamma$ with $G_{xy}(\gamma_D)<1$,
	\end{itemize}
\item the following statements are equivalent:
	\begin{itemize}
	\item[7.] for all $s>0$ we have $F^*_{xy}(s)=1$,
	\item[8.] for all $\alpha\in{\mathcal F}$ we have $G^*_{xy}(\alpha)=1$,
	\item[9.] for all finite subset $D$ of $\Gamma$ we have $G^*_{xy}(\gamma_D)=1$.
	\end{itemize}
\end{itemize}
\end{lemma}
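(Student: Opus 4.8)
The plan is to reduce all nine statements to two elementary facts: that the counting function $G$ specializes to $F$ on the metric entourages, and that $G_{xy}, G^*_{xy}$ are monotone in the entourage. First I would observe that, directly from the definitions, $\xi(x,y,t,n) = \zeta(x,y,\alpha_t,n)$, so that $F_{xy}(t) = G_{xy}(\alpha_t)$ and $F^*_{xy}(t) = G^*_{xy}(\alpha_t)$ for every $t > 0$. Thus the $F$-quantities are nothing but the $G$-quantities evaluated at the particular entourages $\alpha_t$, and the lemma becomes a comparison between three cofinal subfamilies of the single uniformity $\mathcal{F}$: the metric basis $\{\alpha_\varepsilon : \varepsilon > 0\}$, all of $\mathcal{F}$, and the combinatorial basis $\{\gamma_D : D\subseteq\Gamma \text{ finite}\}$.

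The second ingredient is monotonicity. If $\alpha \subseteq \beta$, then the requirement $(f^i(x),f^i(y)) \in \alpha$ is more restrictive than $(f^i(x),f^i(y)) \in \beta$, so $\zeta(x,y,\alpha,n) \leq \zeta(x,y,\beta,n)$ for all $n$, whence $G_{xy}(\alpha) \leq G_{xy}(\beta)$ and $G^*_{xy}(\alpha) \leq G^*_{xy}(\beta)$. With this, each block becomes a short cycle. For the first block, $1 \Rightarrow 2$ is immediate via $\alpha = \alpha_s$; for $2 \Rightarrow 3$, the identity $\mathcal{F} = \mathcal{F}_p$ furnishes a finite $D$ with $\gamma_D \subseteq \alpha$, so $G_{xy}(\gamma_D) \leq G_{xy}(\alpha) = 0$; and for $3 \Rightarrow 1$, the identity $\mathcal{F} = \mathcal{F}_d$ furnishes $\varepsilon > 0$ with $\alpha_\varepsilon \subseteq \gamma_D$, so $F_{xy}(\varepsilon) = G_{xy}(\alpha_\varepsilon) \leq G_{xy}(\gamma_D) = 0$. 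Since $G_{xy} \geq 0$ automatically, each conclusion is a genuine equality to $0$. The second block is verbatim the same three implications with the value $0$ replaced by the strict inequality ``$< 1$''.

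For the third block I would argue dually, using that $\zeta \leq n$ forces $G^*_{xy} \leq 1$ everywhere, so ``$= 1$'' means ``$\geq 1$''. Given $7$, for any $\alpha \in \mathcal{F} = \mathcal{F}_d$ choose $\varepsilon$ with $\alpha_\varepsilon \subseteq \alpha$; then $1 = F^*_{xy}(\varepsilon) = G^*_{xy}(\alpha_\varepsilon) \leq G^*_{xy}(\alpha) \leq 1$, giving $8$. The step $8 \Rightarrow 9$ is trivial, since every $\gamma_D$ lies in $\mathcal{F}$. For $9 \Rightarrow 7$, given $s > 0$ choose a finite $D$ with $\gamma_D \subseteq \alpha_s$ (using $\mathcal{F} = \mathcal{F}_p$), so $1 = G^*_{xy}(\gamma_D) \leq G^*_{xy}(\alpha_s) = F^*_{xy}(s) \leq 1$.

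I do not expect any real obstacle here: the whole argument is a sandwich between two cofinal bases of the same uniformity, and the only point demanding attention is orienting each inclusion correctly. In every passage one locates a \emph{smaller} basic entourage of the complementary type inside a given one; for the first two blocks this pushes the smallness of $G_{xy}$ down to that smaller entourage, while for the third block the value $1$ at the smaller entourage props the larger one up from below. Keeping the equality $\mathcal{F}_p = \mathcal{F}_d$ explicit at each switch between $\gamma_D$'s and $\alpha_\varepsilon$'s is precisely what collapses the nine statements to three.
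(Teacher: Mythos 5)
Your proof is correct and follows essentially the same route as the paper's: the identity $F_{xy}(t)=G_{xy}(\alpha_t)$, the monotonicity of $G_{xy}$ and $G^*_{xy}$ under inclusion of entourages, and the equality $\mathcal{F}_p=\mathcal{F}_d=\mathcal{F}$ to pass between the bases $\{\alpha_\varepsilon\}$ and $\{\gamma_D\}$ are exactly the ingredients used there, with the same cyclic implications in each block.
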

\begin{proof}
Note that for $s>0$ and $n\geq1$ we have $\xi(x,y,s,n)=\zeta(x,y,\alpha_s,n)$, so
$F_{xy}(s)=G_{xy}(\alpha_s)$ (and $F^*_{xy}(s)=G^*_{xy}(\alpha_s)$). Therefore ``$1\Rightarrow2$'' (and
``$4\Rightarrow5$'').
\vspace{3mm}
\\
By ${\mathcal F}_p=\mathcal{F}$, for each $\alpha\in\mathcal{F}$ there exists finite subset $D$ of $\Gamma$
with $\gamma_D\subseteq\alpha$, so $G_{xy}(\gamma_D)\leq G_{xy}(\alpha)$. Therefore ``$2\Rightarrow3$'' (and
``$5\Rightarrow6$'').
\vspace{3mm}
\\
By ${\mathcal F}_p=\mathcal{F}_d$, for each finite subset $D$ of $\Gamma$ there exists $s>0$ with $\alpha_s\subseteq\gamma_D$, so
$F_{xy}(s)=G_{xy}(\alpha_s)\leq G_{xy}(\gamma_D)$. Therefore ``$3\Rightarrow1$'' (and
``$6\Rightarrow4$'').
\vspace{3mm}
\\
``$7\Rightarrow8$'' Suppose for all $s>0$, $F^*_{xy}(s)=1$. For all
$\alpha\in\mathcal{F}$ there exists $s>0$ with $\alpha_s\subseteq\alpha$,
so $1=F^*_{xy}(s)=G^*_{xy}(\alpha_s)\leq G^*_{xy}(\alpha)\leq1$
which leads to $G^*_{xy}(\alpha)=1$.
\vspace{3mm}
\\
It's evident that ``$8\Rightarrow9$''.
\vspace{3mm}
\\
``$9\Rightarrow7$''
for all finite subset $D$ of $\Gamma$ we have $G^*_{xy}(\gamma_D)=1$.
For $s>0$ there exists finite subset $D$ of $\Gamma$ such that
$\gamma_D\subseteq\alpha_s$, so $1=G^*_{xy}(\gamma_D)\leq G^*_{xy}(\alpha_s)=F^*_{xy}(s)\leq1$ which leads to $F^*_{xy}(s)=1$.
\end{proof}
\begin{corollary}\label{salam37}
Using a similar method described in Lemma~\ref{salam35},  $f:X^\Gamma\to X^\Gamma$
is uniform distributional chaotic if and only if it is DC$^u$1 and $X^\Gamma$ has an uncountable distributional
chaotic set of type 1 like $A$ such that for all distinct $x,y\in A$ there exists $\alpha\in{\mathcal F}$ (resp. finite subset $D$ of $\Gamma$) with $G_{xy}(\alpha)=0$
(resp. $G_{xy}(\gamma_D)=0$).
\end{corollary}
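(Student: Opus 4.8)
The plan is to translate the metric-based definition of ``uniform distributional chaotic'' into the uniformity language of $\mathcal{F}$, exactly mirroring the three-way equivalence already established in Lemma~\ref{salam35}. Recall that $f:X^\Gamma\to X^\Gamma$ is uniform distributional chaotic precisely when it is DC$^u$1 and $X^\Gamma$ carries an uncountable type-1 distributional scrambled set $A$ for which a \emph{single} $\varepsilon>0$ works simultaneously for all distinct pairs, i.e.\ $F_{xy}(\varepsilon)=0$ for all distinct $x,y\in A$. The only gap between this statement and the Corollary's reformulation is replacing the quantity ``there is a fixed $\varepsilon>0$ with $F_{xy}(\varepsilon)=0$ for every pair'' by ``for each pair there is some $\alpha\in\mathcal{F}$ (resp.\ finite $D\subseteq\Gamma$) with $G_{xy}(\alpha)=0$ (resp.\ $G_{xy}(\gamma_D)=0$).''

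First I would invoke the pairwise equivalence ``$1\Leftrightarrow2\Leftrightarrow3$'' from Lemma~\ref{salam35}: for any fixed distinct $x,y$, the existence of $s>0$ with $F_{xy}(s)=0$ is equivalent to the existence of $\alpha\in\mathcal{F}$ with $G_{xy}(\alpha)=0$, and equivalent to the existence of a finite $D\subseteq\Gamma$ with $G_{xy}(\gamma_D)=0$. This handles each pair in isolation and immediately gives one direction: if $A$ witnesses uniform distributional chaos with a common $\varepsilon$, then for every pair the corresponding $\alpha_\varepsilon\in\mathcal{F}$ satisfies $G_{xy}(\alpha_\varepsilon)=0$, so the $\mathcal{F}$-condition (and, by ``$2\Rightarrow3$'', the finite-$D$ condition) holds pairwise.

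The genuinely subtle point is the converse, where one must manufacture a \emph{uniform} witness $\varepsilon$ out of the a priori pair-dependent data. The key observation is that $X^\Gamma\times X^\Gamma$ is compact and $\mathcal{F}_p=\mathcal{F}_d=\mathcal{F}$ is the unique uniformity, so the basic entourages $\gamma_D$ for finite $D$ (equivalently, the balls $\alpha_\varepsilon$) are \emph{cofinal in a controlled way}: shrinking $D$ to a singleton $\{\beta\}$ gives the coarsest nontrivial $\gamma_{\{\beta\}}$, and since any $\gamma_D\supseteq\gamma_{\{\beta\}}$ for $\beta\in D$, a single coordinate already detects distributional scrambling. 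Concretely, if for a pair $x,y$ some finite $D$ gives $G_{xy}(\gamma_D)=0$, then because $\gamma_D\supseteq\gamma_{\{\beta\}}$ forces $G_{xy}(\gamma_{\{\beta\}})\le G_{xy}(\gamma_D)=0$ for each $\beta\in D$, a one-coordinate entourage already witnesses the vanishing of $G_{xy}$. Choosing $\varepsilon>0$ small enough that $\alpha_\varepsilon\subseteq\gamma_{\{\beta_0\}}$ for a fixed coordinate $\beta_0\in\Gamma$ (using $\mathcal{F}_d=\mathcal{F}_p$) then yields a uniform $\varepsilon$ valid for every pair, completing the construction of the common witness required by the metric definition.

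The main obstacle I anticipate is precisely this uniformization step: I must argue that a single $\varepsilon$ (or single entourage) can be chosen to work across the uncountably many pairs of $A$, rather than settling for pair-by-pair entourages. This is where compactness of $X^\Gamma$ and the coincidence $\mathcal{F}_p=\mathcal{F}_d$ are indispensable, since in a general metric space no such uniform collapse is available. Once the single-coordinate reduction is in place, the remaining bookkeeping is routine and follows the template of Lemma~\ref{salam35} verbatim, which is exactly why the authors phrase the result as ``using a similar method.''
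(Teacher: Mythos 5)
Your forward direction is fine, but the converse contains a genuine error, and it occurs exactly at the step you yourself flag as the crux. For $\beta\in D$ the inclusion between entourages is $\gamma_D\subseteq\gamma_{\{\beta\}}$ (agreement on all of $D$ implies agreement at $\beta$), not $\gamma_D\supseteq\gamma_{\{\beta\}}$ as you write; since $\zeta(x,y,\cdot,n)$ is monotone increasing in the entourage, this gives $G_{xy}(\gamma_D)\leq G_{xy}(\gamma_{\{\beta\}})$, so $G_{xy}(\gamma_D)=0$ tells you nothing about $G_{xy}(\gamma_{\{\beta\}})$. The single--coordinate reduction therefore collapses; and even if it held, different pairs of $A$ would in general require different coordinates $\beta$, so fixing one $\beta_0$ and one $\varepsilon$ with $\alpha_\varepsilon\subseteq\gamma_{\{\beta_0\}}$ does not produce a common witness. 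Compactness of $X^\Gamma$ and the identity $\mathcal{F}_p=\mathcal{F}_d$ give no uniformization over uncountably many pairs.

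The underlying issue is the reading of the statement. Taken literally, the clause ``for all distinct $x,y\in A$ there exists $\alpha\in\mathcal F$ with $G_{xy}(\alpha)=0$'' is vacuous: by the equivalence $1\Leftrightarrow2\Leftrightarrow3$ of Lemma~\ref{salam35} it holds for \emph{every} distributional scrambled set of type $1$, so the corollary would reduce to ``uniform distributional chaotic iff DC$^u$1,'' which is not something a ``similar method'' yields for an arbitrary continuous $f$. The intended (and provable) statement keeps the quantifier order of the definition of uniform distributional chaos: there is a \emph{single} $\alpha\in\mathcal F$ (resp.\ a single finite $D$) with $G_{xy}(\alpha)=0$ for all distinct $x,y\in A$. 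With that reading no uniformization step is needed at all: one runs the three implications of Lemma~\ref{salam35} verbatim and observes that the witness $D$ with $\gamma_D\subseteq\alpha$ and the witness $\varepsilon$ with $\alpha_\varepsilon\subseteq\gamma_D$ are chosen from the entourage alone, independently of the pair $(x,y)$, so a uniform $\varepsilon$ yields a uniform $\alpha$, a uniform $\alpha$ yields a uniform $D$, and a uniform $D$ yields a uniform $\varepsilon$ automatically.
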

\begin{remark}\label{salam40}
There exists uncountable family $\mathcal K$ of infinite subsets of $\mathbb N$ such that
for all $A,B\in\mathcal{K}$, $A\cap B$ is finite \cite{size, yorke}.
\end{remark}
\begin{lemma}\label{salam50}
If $\varphi:\Gamma\to\Gamma$ has a non-quasi-periodic point, then
$(X^\Gamma,\sigma_\varphi)$ is uniform distributional chaotic.
\end{lemma}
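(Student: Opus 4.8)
The plan is to exploit the fact that a non-quasi-periodic point $\beta$ of $\varphi$ has an infinite forward orbit, so that $\beta_i:=\varphi^i(\beta)$ ($i\geq0$) are pairwise distinct, and that the $\beta$-coordinate of $\sigma_\varphi^i(x)$ is exactly $x_{\beta_i}$. Writing $L=\{\beta_i:i\geq0\}$, this means that for $x,y\in X^\Gamma$ we have $(\sigma_\varphi^i(x),\sigma_\varphi^i(y))\in\gamma_{\{\beta\}}$ if and only if $x_{\beta_i}=y_{\beta_i}$; thus the behaviour of the single-coordinate entourage $\gamma_{\{\beta\}}$ along the orbit is governed by the two sequences $(x_{\beta_i})_i$ and $(y_{\beta_i})_i$ in $X^{\mathbb N}$. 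By Corollary~\ref{salam37} it suffices to produce an uncountable distributional scrambled set of type $1$ for which the fixed finite set $D=\{\beta\}$ witnesses $G_{xy}(\gamma_{\{\beta\}})=0$ for every pair. I would index the points by an almost disjoint family $\mathcal K$ of infinite subsets of $\mathbb N$ (Remark~\ref{salam40}), fixing once and for all two distinct symbols $p,q\in X$.

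For the construction, partition $\mathbb N$ (the index set of $(x_{\beta_i})_i$) into consecutive finite blocks arranged as $Q_1,P_1,Q_2,P_2,\dots$, where each block is chosen astronomically longer than the union of all preceding ones. The $P_k$ are \emph{agreement blocks} and the $Q_k$ are \emph{coding blocks}. For $A\in\mathcal K$ define $x^A\in X^\Gamma$ by: $x^A_\alpha=p$ for every $\alpha\notin L$; $x^A_{\beta_j}=p$ whenever $j$ lies in some $P_k$; and, for $j$ in $Q_k$, set $x^A_{\beta_j}=q$ if $k\in A$ and $x^A_{\beta_j}=p$ otherwise. Distinct members of $\mathcal K$ give distinct points, so $\{x^A:A\in\mathcal K\}$ is uncountable.

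Verifying $F_{x^Ax^B}(s)=0$ for some $s$, uniformly in the pair, is the easy half. For distinct $A,B\in\mathcal K$ the set $A\triangle B$ is infinite (almost disjointness makes $A\setminus B$ infinite), and for each $k\in A\triangle B$ the sequences $(x^A_{\beta_j})_j$ and $(x^B_{\beta_j})_j$ disagree on all of $Q_k$. Evaluating the $\gamma_{\{\beta\}}$-agreement count along the right endpoints of such (dominating) $Q_k$ shows the agreement density tends to $0$ along this subsequence, so $G_{x^Ax^B}(\gamma_{\{\beta\}})=0$; by Lemma~\ref{salam35} this is exactly $F_{x^Ax^B}(s)=0$ for a suitable $s$, and since the same $D=\{\beta\}$ serves all pairs, Corollary~\ref{salam37} yields the uniform condition.

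The delicate half, and the main obstacle, is $F^*_{x^Ax^B}(s)=1$ for every $s>0$, which by Lemma~\ref{salam35} is equivalent to $G^*_{x^Ax^B}(\gamma_{D'})=1$ for every finite $D'\subseteq\Gamma$, not merely for $D'=\{\beta\}$. The difficulty is that $\sigma_\varphi$ drags coordinates around: $(\sigma_\varphi^i(x^A),\sigma_\varphi^i(x^B))\in\gamma_{D'}$ fails only when some $\varphi^i(\alpha)$ with $\alpha\in D'$ lands on a coordinate $\beta_j\in L$ at which $x^A,x^B$ differ. Here I would use that $L$ is forward invariant with $\varphi(\beta_j)=\beta_{j+1}$: for each $\alpha\in D'$ either the orbit of $\alpha$ never meets $L$ (and then the $\alpha$-coordinates agree automatically, since $x^A,x^B$ coincide off $L$), or it enters $L$ at some first time $i_0(\alpha)$ and thereafter $\varphi^i(\alpha)=\beta_{i+c_\alpha}$ for a fixed integer $c_\alpha$. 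Consequently, for all large $i$ the indices $j$ relevant to $\gamma_{D'}$ at time $i$ lie in a fixed bounded window $[i-M,i+M]$ with $M=\max_{\alpha\in D'}|c_\alpha|$. Choosing $i$ in the interior of an agreement block $P_k$, this whole window sits inside $P_k$, where every $x^A_{\beta_j}=p$; hence $(\sigma_\varphi^i(x^A),\sigma_\varphi^i(x^B))\in\gamma_{D'}$. Since $P_k$ dominates everything before it, evaluating at the right endpoint of $P_k$ forces the $\gamma_{D'}$-agreement density to tend to $1$, giving $G^*_{x^Ax^B}(\gamma_{D'})=1$ for every finite $D'$. Combining the two halves shows that $\{x^A:A\in\mathcal K\}$ is the required uncountable uniform distributional scrambled set of type $1$.
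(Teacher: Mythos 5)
Your proposal is correct and follows essentially the same route as the paper's proof: both encode an almost disjoint family of subsets of $\mathbb N$ into blocks of rapidly growing length along the forward orbit of the non-quasi-periodic point, reduce the distributional conditions to the entourages $\gamma_D$ via Lemma~\ref{salam35} and Corollary~\ref{salam37}, and use the dominating last block to push the agreement density to $1$ (on blocks where both points read $p$) or to $0$ (on blocks where the coding sets differ). The only cosmetic difference is that you interleave explicit agreement blocks $P_k$, whereas the paper obtains the same effect by replacing each $A\in\mathcal K$ with $A\cup 2\mathbb N$ so that any two members of the family share infinitely many common blocks.
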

\begin{proof}
Suppose $\theta\in\Gamma$ is a non-quasi-periodic point of $\varphi$.
Choose an strictly increasing sequence $\{s_n\}_{n\geq1}$ of natural numbers
such that for all $n\geq1$ we have $\dfrac{s_n}{s_1+\cdots+s_n}>\dfrac{n-1}{n}$.
Choose distinct $p,q\in X$. For $A\subseteq \mathbb N$ let:
\[z_i^A=\left\{\begin{array}{lc} p & i\in A\: , \\
	q & \textrm{ otherwise\:}. \end{array}\right.\]
Now suppose $x_\beta^A=q$ for $\beta\in\Gamma\setminus\{\varphi^n(\theta):n\geq0\}$ and:
\[(x_\theta^A,x_{\varphi(\theta)}^A,x_{\varphi^2(\theta)}^A,\cdots)=
(\underbrace{z_1^A,\cdots,z_1^A}_{s_1\textrm{ times}},
\underbrace{z_2^A,\cdots,z_2^A}_{s_2\textrm{ times}},
\underbrace{z_3^A,\cdots,z_3^A}_{s_3\textrm{ times}},\cdots) \]
Also let $x^A:=(x_\alpha^A)_{\alpha\in\Gamma}$. We continue the proof through two claims.
\vspace{3mm}
\\
{\bf Claim 1.} For $A,B\subseteq {\mathbb N}$ with infinite $A\cap B$
we have $F_{x^Ax^B}^*(t)=1$ for all $t>0$.
\vspace{3mm}
\\
{\it Proof of Claim 1}. 
Consider finite subsets 
\[D\subseteq \Gamma\setminus\bigcup\{\varphi^i(\theta):
i\in{\mathbb Z}\}\:\:,\:\: E\subseteq \{\varphi^i(\theta):i\geq0\}\:\:,\:\:
F\subseteq \bigcup\{\varphi^{-i}(\theta):i\geq1\}\:.\]
It's evident that: 
\[\forall n\geq0\: \: \: \: \: (\sigma_\varphi^n(x^A),\sigma_\varphi^n(x^B))\in\gamma_D\:.\tag{A}\]
There exists
$N\geq1$ with 
$E\subseteq\{\varphi^i(\theta):0\leq i\leq N\}$ and $F\subseteq\bigcup\{\varphi^{-i}(\theta):1\leq i\leq N\}$.
For all 
$r\in A\cap B
\setminus\{0,1,\ldots,2N\}$ we have $s_r>2N$ and
\[3N-1< r-1+N\leq s_1+\cdots+s_{r-1}+N<s_1+\cdots+s_r-N\:,\]
so for all $j,k$ with $s_1+\cdots+s_{r-1}+N<j<s_1+\cdots+s_r-N$ and $0\leq k\leq N$ we have
$0\leq s_1+\cdots+s_{r-1}+N-k<j<s_1+\cdots+s_r+N-k$ and
$s_1+\cdots+s_{r-1}<j+k-N<s_1+\cdots+s_r$ so
\[x^A_{\varphi^{(j+k-N)}(\theta)}=p=x^B_{\varphi^{(j+k-N)}(\theta)}\:.\tag{*}\]
For $\alpha\in E$
there exists $0\leq k\leq N$ with $\varphi^k(\theta)=\alpha$,
thus by $(*)$ for all $s_1+\cdots+s_{r-1}+N<j<s_1+\cdots+s_r-N$ we have:
\[x^A_{\varphi^{(j-N)}(\alpha)}=p=x^B_{\varphi^{(j-N)}(\alpha)}\:.\tag{B}\]
For $\alpha\in F$
there exists $1\leq l\leq N$ with $\varphi^l(\alpha)=\theta$,
thus by $(*)$ for all $0\leq k\leq N$ and $s_1+\cdots+s_{r-1}+N<j<s_1+\cdots+s_r-N$ we have:
\[x^A_{\varphi^{(j+k-N)}(\varphi^l(\alpha))}=p=x^B_{\varphi^{(j+k-N)}(\varphi^l(\alpha))}\]
let $k=N-l$, hence:
\[x^A_{\varphi^{(j+N)}(\alpha)}=p=x^B_{\varphi^{(j+N)}(\alpha)}\:.\tag{C}\]
Therefore:
\\
$\#\{i\in\{0,\ldots,s_1+\cdots+s_r-1\}:(\sigma_\varphi^i(x^A),\sigma_\varphi^i(x^B))
	\in \gamma_{D\cup E\cup F}\}$
\begin{eqnarray*}
	& \mathop{=}\limits^{(\textrm{A})} & \#\{i\in\{0,\ldots,s_1+\cdots+s_r-1\}:
	(\sigma_\varphi^i(x^A),\sigma_\varphi^i(x^B))\in \gamma_{E\cup F}\} \\
	& = & \#\{i\in\{0,\ldots,s_1+\cdots+s_r-1\}:\forall\alpha\in E\cup F\:\:\:
	x_{\varphi^i(\alpha)}^A=x_{\varphi^i(\alpha)}^B\} \\
& \mathop{\geq}\limits^{(\textrm{(C)  and  (D)})} & \#\{s_1+\cdots+s_{r-1}+2N+1,\ldots,s_1+\cdots+s_r-2N-1\} \\
& = & s_r-4N-1\:.
\end{eqnarray*}
Hence:
\vspace{3mm}
\\
$G^*_{x^Ax^B}(\gamma_{D\cup E\cup F})$
\begin{eqnarray*}
& = & \mathop{\limsup}\limits_{n\to\infty}\dfrac{\#\{i\in\{0,\ldots,n-1\}:(\sigma_\varphi^i(x^A),		
	\sigma_\varphi^i(x^B))\in \gamma_{D\cup E\cup F}\}}{n} \\
& \geq & \mathop{\limsup}\limits_{r\to\infty, r\in A\cap B}\dfrac{\#\{i\in\{0,\ldots,s_1+\cdots+s_r-1\}:(\sigma_\varphi^i(x^A),		
	\sigma_\varphi^i(x^B))\in \gamma_{D\cup E\cup F}\}}{s_1+\cdots+s_r} \\
& \geq &  \mathop{\limsup}\limits_{r\to\infty, r\in A\cap B}\dfrac{s_r-4N-1}{s_1+\cdots+s_r} \\
& \geq &  \mathop{\limsup}\limits_{r\to\infty, r\in A\cap B}(\dfrac{r-1}{r}-\dfrac{4N+1}{s_1+\cdots+s_r})=1
\end{eqnarray*}
So $G^*_{x^Ax^B}(\gamma_{D\cup E\cup F})=1$ which leads to
$G^*_{x^Ax^B}(\gamma_{M})=1$ for all finite subset $M$ of $\Gamma$.
Thus by Lemma~\ref{salam35}, $F^*_{x^Ax^B}(t)=1$ for all $t>0$.
\vspace{3mm}
\\
{\bf Claim 2.} There exists $t>0$ such that for all $A,B\subseteq {\mathbb N}$
with infinite $A\setminus B$ we have $F_{x^Ax^B}(t)=0$.
\vspace{3mm}
\\
{\it Proof of Claim 2}. There exists $t>0$ with
$\{(z,w)\in X^\Gamma\times X^\Gamma:d(z,w)<t\}\subseteq\gamma_{\{\theta\}}$.
For $r\in A\setminus B$
and $s_1+\cdots+s_{r-1}<i<s_1+\cdots+s_r$ we have $x_{\varphi^i(\theta)}^A=p$ and
$x_{\varphi^i(\theta)}^B=q$, so $x_{\varphi^i(\theta)}^A\neq x_{\varphi^i(\theta)}^B$
and $(\sigma_\varphi^i(x^A),\sigma_\varphi^i(x^B))\notin\gamma_{\{\theta\}}$. Hence
we have:
\begin{eqnarray*}
F_{x^Ax^B}(t) & = & \mathop{\liminf}\limits_{n\to\infty}\dfrac{\#\{i\in\{0,\ldots,n-1\}:d(\sigma_\varphi^i(x^A),		
	\sigma_\varphi^i(x^B))<t\}}{n} \\
& \leq & \mathop{\liminf}\limits_{n\to\infty}
	\dfrac{\#\{i\in\{0,\ldots,n-1\}:(\sigma_\varphi^i(x^A),		
	\sigma_\varphi^i(x^B))\in\gamma_{\{\theta\}}\}}{n} \\
& \leq & \mathop{\liminf}\limits_{r\to\infty, r\in A\setminus B}
	\dfrac{\#\{i\in\{0,\ldots,s_1+\cdots+s_r-1\}:(\sigma_\varphi^i(x^A),		
	\sigma_\varphi^i(x^B))\in\gamma_{\{\theta\}}\}}{s_1+\cdots+s_r} \\
& \leq & \mathop{\liminf}\limits_{r\to\infty, r\in A\setminus B}
	\dfrac{s_1+\cdots+s_{r-1}}{s_1+\cdots+s_r} \\
& = & 1-\mathop{\limsup}\limits_{r\to\infty, r\in A\setminus B}
	\dfrac{s_r}{s_1+\cdots+s_r} \\
& \leq & 1-\mathop{\limsup}\limits_{r\to\infty, r\in A\setminus B}
	\dfrac{r-1}{r}=0
\end{eqnarray*}
which leads to $F_{x^Ax^B}(t)=0$.
\vspace{3mm}
\\
Now we are ready to prove Lemma, by Remark~\ref{salam40} there exists uncountable subset
$\mathcal{K}$ of infinite subsets of ${\mathbb N}\setminus 2{\mathbb N}$
such that for all $A,B\in\mathcal{K}$, $A\cap B$ is finite, thus $A\setminus B$ is infinite.
Let $\mathcal{H}:=\{A\cup2{\mathbb N}:A\in\mathcal{K}\}$, then for all distinct $C,D\in
\mathcal{H}$, both sets $C\cap D,C\setminus D$ are infinite. Hence by Claims 1 and 2
there exists $t>0$ such that for all distinct $C,D\in\mathcal{H}$ we have $F^*_{x^Cx^D}=1$
and $F_{x^Cx^D}(t)=0$. Therefore $\{x^C:C\in\mathcal{H}\}$ is an uncountable distributional
scrambled set of type 1. In particular, $(X^\Gamma,\sigma_\varphi)$ is uniform
distributional chaotic.
\end{proof}
\begin{theorem}\label{salam60}
The generalized shift dynamical system $(X^\Gamma,\sigma_\varphi)$ is
uniform distributional chaotic
(resp. DC$^u$1, DC$^\infty$1, DC$^2$1, DC$^u$2, DC$^\infty$2, DC$^2$2) if and only if
$\varphi:\Gamma\to\Gamma$ has at least a non-quasi-periodic point.
\end{theorem}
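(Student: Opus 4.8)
The plan is to exploit the fact that the seven listed notions form a hierarchy with \emph{uniform distributional chaotic} at the top and DC$^2$2 at the bottom, so that the whole block of equivalences collapses to a single cycle
\[
\text{non-quasi-periodic point}\ \Rightarrow\ \text{uniform distributional chaotic}\ \Rightarrow\ \mathrm{DC}^22\ \Rightarrow\ \text{non-quasi-periodic point}.
\]
The first arrow is exactly Lemma~\ref{salam50}, and the last arrow is the only genuinely new ingredient; the middle implications are bookkeeping.

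For the ``if'' direction I would assume $\varphi$ has a non-quasi-periodic point and invoke Lemma~\ref{salam50} to obtain that $(X^\Gamma,\sigma_\varphi)$ is uniform distributional chaotic. By definition this system is in particular DC$^u$1, and restricting the uncountable type-$1$ scrambled set to infinite, then two-element, subsets yields DC$^\infty$1 and DC$^2$1; finally Note~\ref{salam10} upgrades each DC$^h$1 to DC$^h$2 for $h\in\{u,\infty,2\}$. Thus a single application of Lemma~\ref{salam50} forces all seven properties simultaneously.

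For the ``only if'' direction I would reduce every listed notion to the weakest one, DC$^2$2: indeed uniform DC $\Rightarrow$ DC$^u$1 $\Rightarrow$ DC$^\infty$1 $\Rightarrow$ DC$^2$1, while DC$^u$2 $\Rightarrow$ DC$^\infty$2 $\Rightarrow$ DC$^2$2, and Note~\ref{salam10} gives DC$^h$1 $\Rightarrow$ DC$^h$2, so each hypothesis implies DC$^2$2. So suppose $(X^\Gamma,\sigma_\varphi)$ is DC$^2$2, witnessed by distinct $x,y$ that are distributional scrambled of type $2$. The crux of the argument, and the step I expect to carry the real weight, is to verify that such a pair is automatically Li-Yorke scrambled. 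From $F^*_{xy}(s)=1$ for every $s>0$ one gets that each set $\{i:d(\sigma_\varphi^i(x),\sigma_\varphi^i(y))<s\}$ is infinite, whence $\liminf_{n\to\infty}d(\sigma_\varphi^n(x),\sigma_\varphi^n(y))=0$; from the existence of $s_0>0$ with $F_{xy}(s_0)<1$ the complementary set $\{i:d(\sigma_\varphi^i(x),\sigma_\varphi^i(y))\ge s_0\}$ has positive upper density and in particular is infinite, whence $\limsup_{n\to\infty}d(\sigma_\varphi^n(x),\sigma_\varphi^n(y))\ge s_0>0$. Hence $\{x,y\}$ is a two-point Li-Yorke scrambled set, so $(X^\Gamma,\sigma_\varphi)$ is LY$_2$ chaotic, and Remark~\ref{salam20} then produces a non-quasi-periodic point of $\varphi$, closing the cycle.

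The genuine difficulty is already absorbed into Lemma~\ref{salam50}, with its explicit two-parameter block construction and the density normalization $s_r/(s_1+\cdots+s_r)\to 1$; at the level of the theorem the only subtle point is the elementary passage DC$^2$2 $\Rightarrow$ Li-Yorke scrambled, where I would be careful that the $\liminf$/$\limsup$ distance conditions really do follow from the density conditions on $F_{xy}$ and $F^*_{xy}$ (rather than conflating, say, infinitude of a set with positivity of its density). Everything else is assembling the monotonicity relations among the DC variants and citing Note~\ref{salam10}, Lemma~\ref{salam50}, and Remark~\ref{salam20}.
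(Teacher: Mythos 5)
Your proposal is correct and follows essentially the same route as the paper: reduce all seven notions to DC$^2$2, observe that a distributional scrambled pair of type 2 is Li--Yorke scrambled and invoke Remark~\ref{salam20}, and for the converse apply Lemma~\ref{salam50} together with Note~\ref{salam10} and the obvious inclusions among scrambled sets. The only (welcome) addition is that you spell out the $\liminf$/$\limsup$ verification of the type-2 $\Rightarrow$ Li--Yorke step, which the paper merely asserts.
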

\begin{proof}
If $(X^\Gamma,\sigma_\varphi)$ is uniform distributional chaotic
(resp. DC$^u$1, DC$^\infty$1, DC$^2$1, DC$^u$2, DC$^\infty$2), then it is DC$^2$2
and has a distributional scrambled pair of type 2, this pair is a Li-Yorke scrambled pair too
and $(X^\Gamma,\sigma_\varphi)$ is LY$^2$ and by Remark~\ref{salam20}
\linebreak
$\varphi:\Gamma\to\Gamma$
 has at least a non-quasi-periodic point. On the other hand if 
 \linebreak
 $\varphi:\Gamma\to\Gamma$
 has  a non-quasi-periodic point, then by Lemma~\ref{salam50}, $(X^\Gamma,\sigma_\varphi)$ is uniform distributional chaotic and hence DC$^u$1,
 use Note~\ref{salam10} to complete the proof.
\end{proof}
\noindent
Let's mention that distributional chaos type 3, depends on chosen compatible metric of phase space
\cite[Theorem 2]{3ver}. Now we have the following question:
\\
{\bf Problem.} Suppose $\Gamma=\{\beta_1,\beta_2,\ldots\}$ with distinct $\beta_i$s and
equip $X^\Gamma$ with compatible metric
\[D((x_\alpha)_{\alpha\in\Gamma},(y_\alpha)_{\alpha\in\Gamma})={\displaystyle\sum_{n\geq1}
\dfrac{\delta(x_{\beta_i},y_{\beta_i})}{2^i}}\:\:((x_\alpha)_{\alpha\in\Gamma},(y_\alpha)_{\alpha\in\Gamma}\in X^\Gamma)\]
where
$\delta(a,b)=0$ for $a\neq b$ and $\delta(a,a)=1$. Under which conditions
$(X^\Gamma,\sigma_\varphi)$ is DC$^u$3 (DC$^\infty$3, DC$^2$3)?
\section{Dense distributional chaotic generalized shifts}
\noindent In this section we see $(X^\Gamma,\sigma_\varphi)$ is dense distributional chaotic if and only if
$\varphi:\Gamma\to\Gamma$ does not have any periodic point.
\begin{lemma}\label{salam70}
If $(X^\Gamma,\sigma_\varphi)$ is dense distributional chaotic, then
$\varphi:\Gamma\to\Gamma$ does not have any periodic point.
\end{lemma}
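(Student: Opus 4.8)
The plan is to argue by contradiction, drawing the obstruction out of the \emph{``often close at every scale''} half of the scrambling condition rather than the ``often far'' half. Assume $(X^\Gamma,\sigma_\varphi)$ is dense distributional chaotic, so there is a dense subset $A$ of $X^\Gamma$ which is a distributional scrambled set of type $1$; in particular every pair of distinct $x,y\in A$ satisfies $F^*_{xy}(s)=1$ for all $s>0$. Suppose, towards a contradiction, that $\varphi$ has a periodic point $\theta$, say of minimal period $m$, and write $O=\{\theta,\varphi(\theta),\dots,\varphi^{m-1}(\theta)\}$ for its finite orbit, so that $\varphi^i(\theta)=\varphi^{\,i\bmod m}(\theta)$ for every $i\ge 0$.

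First I would localize the scrambling condition at the single coordinate $\theta$. By the equivalence $7\Leftrightarrow9$ of Lemma~\ref{salam35}, the hypothesis $F^*_{xy}(s)=1$ for all $s>0$ forces $G^*_{xy}(\gamma_{\{\theta\}})=1$ for all distinct $x,y\in A$. The reason for choosing the periodic coordinate is that the relevant counting sequence becomes genuinely periodic: since $(\sigma_\varphi^i(x))_\theta=x_{\varphi^i(\theta)}$ and $\varphi^i(\theta)$ runs cyclically through $O$, the indicator $i\mapsto\mathbf 1[x_{\varphi^i(\theta)}=y_{\varphi^i(\theta)}]$ has period $m$. Hence $\tfrac1n\zeta(x,y,\gamma_{\{\theta\}},n)$ actually converges, and
\[G^*_{xy}(\gamma_{\{\theta\}})=\frac{\#\{\,j\in\{0,\dots,m-1\}:x_{\varphi^j(\theta)}=y_{\varphi^j(\theta)}\,\}}{m}\:.\]
Combining the two facts, the numerator must equal $m$; that is, $x$ and $y$ agree at every coordinate of $O$.

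The last step is to convert this coordinatewise rigidity into a density obstruction. Fixing one point $x_0\in A$ and setting $w_0:=x_0\restriction_O\in X^O$, the previous paragraph shows $x\restriction_O=w_0$ for every $x\in A$, so $A$ is contained in the cylinder $C:=\{x\in X^\Gamma:x\restriction_O=w_0\}$. Because $|X|\ge 2$ and $O\neq\varnothing$, the set $C$ is a proper closed subset of $X^\Gamma$, so its complement is a nonempty open set disjoint from $A$, contradicting the density of $A$. Therefore $\varphi$ has no periodic point.

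I expect the only genuinely delicate point to be the recognition that the obstruction lives in the clause $F^*_{xy}(s)=1$ evaluated at a \emph{periodic} coordinate: there the counting density is forced to exist and to equal a rational number $c/m$, so demanding it be $1$ collapses it to full agreement on $O$. Everything else---the passage between the metric and the uniformity $\mathcal F$ through Lemma~\ref{salam35}, and the fact that a cylinder over a nonempty finite set is a proper closed subset---is routine. Note in particular that neither the uncountability of $A$ nor the uniform $\varepsilon$ is needed for this direction; density together with the single scrambling inequality $F^*_{xy}(s)=1$ already suffices.
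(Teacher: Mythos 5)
Your proof is correct, and it arrives at the same underlying contradiction as the paper --- a periodic orbit $O$ of $\varphi$ is incompatible with the clause $F^*_{xy}(s)=1$ holding on a dense scrambled set --- but it organizes the argument in the opposite order. The paper deploys density first: it chooses $t>0$ with $\alpha_t\subseteq\gamma_O$, then uses density of $A$ to find $x\in A$ identically $p$ on $O$ and $y\in A$ identically $q$ on $O$; since $O$ is $\varphi$-invariant, $\sigma_\varphi^i(x)$ and $\sigma_\varphi^i(y)$ disagree on $O$ for every $i$, so the count is identically zero and $F^*_{xy}(t)=0$, contradicting type-1 scrambling. You instead deploy the scrambling condition first: via the equivalence $7\Leftrightarrow 9$ of Lemma~\ref{salam35} and the observation that at a periodic coordinate the counting averages genuinely converge to the fraction of the orbit on which the two points agree, you force every pair in $A$ to coincide on all of $O$, so that $A$ is trapped in a proper closed cylinder and cannot be dense. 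Your route yields a slightly stronger structural fact (any set on which $F^*_{xy}(s)=1$ holds for all pairs and all $s>0$ must be constant on each periodic orbit of $\varphi$), at the cost of the small extra step about Cesàro averages of an $m$-periodic indicator sequence; the paper's route sidesteps that by exhibiting one pair whose agreement count is exactly zero. Both versions correctly use only the $F^*$ half of the scrambling definition together with density, and neither needs uncountability of $A$ or the uniform $\varepsilon$.
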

\begin{proof}
Choose distinct $p,q\in X$, if $\theta$ is a periodic point of $\varphi$, then $\{\varphi^i(\theta):i\geq0\}$
is a finite subset of $\Gamma$ and there exists $t>0$ such that
$\{(x,y)\in X^\Gamma\times X^\Gamma:d(x,y)<t\}\subseteq \gamma_{\{\varphi^i(\theta):i\geq0\}}$.
If $A$ is a dense distributional scrambled subset of $X^\Gamma$ of type 1, then for open subsets
$U=\mathop{\prod}\limits_{\alpha\in\Gamma}U_\alpha$ 
and $V=\mathop{\prod}\limits_{\alpha\in\Gamma}V_\alpha$ of $X^\Gamma$
with:
\[U_\alpha=\left\{\begin{array}{lc} \{p\} & \alpha\in\{\varphi^n(\theta):n\geq0\}\:, \\
X & \alpha\notin\{\varphi^n(\theta):n\geq0\}\:, \end{array} \right. \: \: \: \: \: 
V_\alpha=\left\{\begin{array}{lc} \{q\} & \alpha\in\{\varphi^n(\theta):n\geq0\}\:, \\
X & \alpha\notin\{\varphi^n(\theta):n\geq0\}\:, \end{array} \right. \]
then there exist
$x=(x_\alpha)_{\alpha\in\Gamma}\in A\cap U$ and $y=(y_\alpha)_{\alpha\in\Gamma}\in A\cap V$ so
\linebreak
$\{x_{\varphi^n(\theta)}:n\geq0\}=\{p\}$ and $\{y_{\varphi^n(\theta)}:n\geq0\}=\{q\}$,
hence for all $i\geq0$ we have
$(\sigma_\varphi^i(x),\sigma_\varphi^i(y))\notin\gamma_{\{\varphi^i(\theta):i\geq0\}}$
and $d(\sigma_\varphi^i(x),\sigma_\varphi^i(y))\geq t$, thus
\[\#\{i\in\{0,\ldots,n-1\}:d(\sigma_\varphi^i(x),\sigma_\varphi^i(y))<t\}=0\]
and $F^*_{xy}(t)=F_{xy}(t)=0$ which is a contradiction, hence $X^\Gamma$
does not have any dense distributional scrambled subset of type 1, thus
$(X^\Gamma,\sigma_\varphi)$ is not dense distributional chaotic.
\end{proof}
\begin{lemma}\label{salam75}
For $s>0$, there exists $r>0$ such that
for all
$x=(x_\alpha)_{\alpha\in\Gamma}, y=(y_\alpha)_{\alpha\in\Gamma},z=(z_\alpha)_{\alpha\in\Gamma}
\in X^\Gamma$ if $\{\alpha\in\Gamma:x_\alpha\neq z_\alpha\}$ is a finite collection of non--quasi periodic points of $\varphi$, then
$F_{xy}^*(r)\leq F_{zy}^*(s)$ and $F_{xy}(r)\leq F_{zy}(s)$. 
\end{lemma}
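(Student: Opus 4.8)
The plan is to reduce everything to a single triangle-inequality estimate that is valid for all but finitely many iterates. Fix $s>0$ and set $r:=\tfrac{s}{2}$; I claim this $r$ works (only $0<r<s$ is really needed). Since $\alpha_{s/2}$ is an entourage, it lies in $\mathcal F_d=\mathcal F_p$, so I first choose a finite set $E\subseteq\Gamma$ with $\gamma_E\subseteq\alpha_{s/2}$; crucially $E$ depends only on $s$, not on $x,y,z$. Then I fix $x,y,z$ as in the statement and write $D:=\{\alpha\in\Gamma:x_\alpha\neq z_\alpha\}$, which by hypothesis is a finite set of non-quasi-periodic points of $\varphi$.

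The key combinatorial step is to show that the exceptional set $I_0:=\{i\geq0:\varphi^i(\alpha)\in D\text{ for some }\alpha\in E\}$ is finite. For this I would fix $\alpha\in E$ and $\beta\in D$ and observe that $\varphi^i(\alpha)=\beta=\varphi^j(\alpha)$ with $i<j$ would force $\varphi^{j-i}(\beta)=\varphi^{j-i}(\varphi^i(\alpha))=\varphi^j(\alpha)=\beta$, making $\beta$ periodic --- impossible, since a non-quasi-periodic point is in particular non-periodic. Hence there is at most one $i\geq0$ with $\varphi^i(\alpha)=\beta$, and therefore $\#I_0\leq\#E\cdot\#D<\infty$. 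This is the heart of the lemma and the only place where the non-quasi-periodicity hypothesis is actually used; everything afterwards is routine.

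Next I would translate $I_0$ into a statement about the orbits of $x$ and $z$. Because $(\sigma_\varphi^i(x))_\alpha=x_{\varphi^i(\alpha)}$, the pair $(\sigma_\varphi^i(x),\sigma_\varphi^i(z))$ lies in $\gamma_E$ precisely when $\varphi^i(\alpha)\notin D$ for every $\alpha\in E$, that is, exactly when $i\notin I_0$. Thus for every $i\notin I_0$ we have $(\sigma_\varphi^i(x),\sigma_\varphi^i(z))\in\gamma_E\subseteq\alpha_{s/2}$, so $d(\sigma_\varphi^i(x),\sigma_\varphi^i(z))<\tfrac{s}{2}$. The triangle inequality then gives, for each $i\notin I_0$,
\[
d(\sigma_\varphi^i(x),\sigma_\varphi^i(y))<r=\tfrac{s}{2}\ \Longrightarrow\ d(\sigma_\varphi^i(z),\sigma_\varphi^i(y))<\tfrac{s}{2}+\tfrac{s}{2}=s .
\]

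Finally I would carry out the counting. The implication above shows that $\{i\in\{0,\dots,n-1\}:d(\sigma_\varphi^i(x),\sigma_\varphi^i(y))<r\}$ is contained in $\{i\in\{0,\dots,n-1\}:d(\sigma_\varphi^i(z),\sigma_\varphi^i(y))<s\}\cup I_0$, whence $\xi(x,y,r,n)\leq\xi(z,y,s,n)+\#I_0$. Dividing by $n$ and letting $n\to\infty$, the bounded term $\#I_0/n$ vanishes, so taking $\limsup$ yields $F^*_{xy}(r)\leq F^*_{zy}(s)$ and taking $\liminf$ yields $F_{xy}(r)\leq F_{zy}(s)$, as required. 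I do not anticipate any obstacle beyond the finiteness of $I_0$: once that is established, the remaining estimate is a standard averaging argument in which the fixed finite correction $\#I_0$ disappears in the limit.
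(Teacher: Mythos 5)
Your proof is correct and follows essentially the same route as the paper's: both hinge on the observation that a finite set of non--quasi-periodic (hence non-periodic) points can be hit at most once by the forward orbit of each index in a fixed finite set $E$, so the two counting functions differ by a bounded correction that vanishes after dividing by $n$. The only cosmetic difference is in how $r$ is produced --- the paper sandwiches $\gamma_{\{\beta_1,\ldots,\beta_m\}}$ between $\alpha_r$ and $\alpha_s$ via the uniform structure, whereas you take $r=s/2$ and invoke the triangle inequality (note that your parenthetical claim that any $0<r<s$ would do is not quite right for your argument, which needs $r\le s/2$ given your choice of $E$, but this does not affect correctness).
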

\begin{proof}
Choose $s>0$, there exist
$\beta_1,\ldots,\beta_m\in\Gamma$ and $r>0$ with
{\small\[\{(u,v)\in X^\Gamma\times X^\Gamma:d(u,v)<r\}\subseteq
\gamma_{\{\beta_1,\ldots,\beta_m\}}\subseteq
\{(u,v)\in X^\Gamma\times X^\Gamma:d(u,v)<s\}\:.\tag{*}\]}
For
$x=(x_\alpha)_{\alpha\in\Gamma}, y=(y_\alpha)_{\alpha\in\Gamma},z=(z_\alpha)_{\alpha\in\Gamma}
\in X^\Gamma$ suppose 
$\{\alpha\in\Gamma:x_\alpha\neq z_\alpha\}$ is a finite collection of non--quasi periodic points of $\varphi$, then
there exists $N\geq1$ such that
\[\{\alpha\in\Gamma:x_\alpha\neq z_\alpha\}\cap\{\varphi^i(\{\beta_1,\ldots,\beta_m\}):i\geq N\}=\varnothing\:.\]
Now we have:

$\{\alpha\in\Gamma:x_\alpha\neq z_\alpha\}\cap\{\varphi^i(\{\beta_1,\ldots,\beta_m\}):i\geq N\}=\varnothing$
\begin{eqnarray*}
& \Rightarrow & \forall i\geq N\:\forall j\in\{1,\ldots ,m\}
	\: \: \: \: \:(x_{\varphi^i(\beta_j)}=z_{\varphi^i(\beta_j)}) \\
& \Rightarrow & \forall i\geq N\:\forall j\in\{1,\ldots ,m\}
	\: \: \: \: \:((x_{\varphi^i(\beta_j)}=y_{\varphi^i(\beta_j)})\Leftrightarrow
	(z_{\varphi^i(\beta_j)}=y_{\varphi^i(\beta_j)})) \\
& \Rightarrow & \forall i\geq N\: \: \: \: \:(
	(\sigma_\varphi^i(x),\sigma_\varphi^i(y))\in
	\gamma_{\{\beta_1,\ldots,\beta_m\}}\Leftrightarrow
	(\sigma_\varphi^i(z),\sigma_\varphi^i(y))\in
	\gamma_{\{\beta_1,\ldots,\beta_m\}})
\end{eqnarray*}
Hence for all $n\geq1$ we have
\\
$(**)\: \: \: \: \: \left|
\#\left\{i\in\{0,\ldots,n-1\}:(\sigma_\varphi^i(x),\sigma_\varphi^i(y))\in
	\gamma_{\{\beta_1,\ldots,\beta_m\}}\right\}\right.-$
	\[\left.\#\left\{i\in\{0,\ldots,n-1\}:(\sigma_\varphi^i(z),\sigma_\varphi^i(y))\in
	\gamma_{\{\beta_1,\ldots,\beta_m\}}\right\}\right|\leq N\]
Using (*) and (**) we have

$\#\left\{i\in\{0,\ldots,n-1\}:d(\sigma_\varphi^i(x),\sigma_\varphi^i(y))<r\right\}$
\begin{eqnarray*}
	& \leq &
	\#\left\{i\in\{0,\ldots,n-1\}:(\sigma_\varphi^i(x),\sigma_\varphi^i(y))\in
	\gamma_{\{\beta_1,\ldots,\beta_m\}}\right\} \\
& \leq & \#\left\{i\in\{0,\ldots,n-1\}:(\sigma_\varphi^i(z),\sigma_\varphi^i(y))\in
	\gamma_{\{\beta_1,\ldots,\beta_m\}}\right\} +N \\
& \leq &
	\#\left\{i\in\{0,\ldots,n-1\}:d(\sigma_\varphi^i(z),\sigma_\varphi^i(y))<s\right\}+N
\end{eqnarray*}
Therefore:
\begin{eqnarray*}
F^*_{xy}(r) & = & {\displaystyle\limsup_{n\to\infty}
	\dfrac{\#\left\{i\in\{0,\ldots,n-1\}:d(\sigma_\varphi^i(x),\sigma_\varphi^i(y))
	<r\right\}}{n}} \\
& \leq & {\displaystyle\limsup_{n\to\infty}
	\dfrac{\#\left\{i\in\{0,\ldots,n-1\}:d(\sigma_\varphi^i(z),\sigma_\varphi^i(y))
	<s\right\}+N}{n}} \\
& = & {\displaystyle\limsup_{n\to\infty}
	\dfrac{\#\left\{i\in\{0,\ldots,n-1\}:d(\sigma_\varphi^i(z),\sigma_\varphi^i(y))
	<s\right\}}{n}} = F^*_{zy}(s)
\end{eqnarray*}
so $F^*_{xy}(r)\leq F^*_{zy}(s)$, and by a similar method $F_{xy}(r)\leq F_{zy}(s)$.
\end{proof}
\begin{corollary}\label{salam80}
For   $x=(x_\alpha)_{\alpha\in\Gamma}\in X^\Gamma$ choose $y^x=(y^x_\alpha)_{\alpha\in\Gamma}\in X^\Gamma$
such that $\{\alpha\in\Gamma:x_\alpha\neq y^x_\alpha\}$ is a finite collection of non--quasi periodic
points of $\varphi$. Moreover consider $D\subseteq X^\Gamma$ and $u,v\in X^\Gamma$, then:
\begin{itemize}
\item[1.] ``for all $t>0$ we have $F_{uv}^*(t)=1$'' if and only if
	``for all $t>0$ we have $F_{y^uy^v}^*(t)=1$'',
\item[2.] ``there exists $t>0$ such that for all distinct $x,z\in D$ we have $F_{xz}(t)=0$'' if and only if
	``there exists $t>0$ such that for all distinct $x,z\in D$ we have  $F_{y^xy^z}(t)=0$'',
\item[3.] ``there exists $t>0$ such that for all distinct $x,z\in D$ we have $F_{xz}(t)<1$'' if and only if
	``there exists $t>0$ such that for all distinct $x,z\in D$ we have  $F_{y^xy^z}(t)<1$'',
\end{itemize}
In particular for $i\in\{1,2\}$,
$D$ is distributional scrambled subset of type $i$ if and only if
$\{y^x:x\in D\}$ is distributional scrambled subset of type $i$.
\end{corollary}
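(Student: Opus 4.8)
The plan is to deduce all three equivalences from Lemma~\ref{salam75} by perturbing one coordinate of $F$ (or $F^*$) at a time. Two elementary facts make this work. First, the relation between $x$ and $y^x$ is symmetric: the set $\{\alpha\in\Gamma:x_\alpha\neq y^x_\alpha\}=\{\alpha\in\Gamma:y^x_\alpha\neq x_\alpha\}$ is the \emph{same} finite set of non--quasi periodic points, so Lemma~\ref{salam75} applies equally when $x$ plays the role of the perturbed point or of the reference point; this is what will give both directions of each ``if and only if''. Second, since $d$ is symmetric we have $\xi(x,y,t,n)=\xi(y,x,t,n)$ and hence $F_{xy}=F_{yx}$, $F^*_{xy}=F^*_{yx}$; this lets me transfer a perturbation to the \emph{second} coordinate by swapping arguments, applying Lemma~\ref{salam75} (which only ever perturbs its first coordinate), and swapping back.

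For item~1, fix $s>0$. Applying Lemma~\ref{salam75} to the triple $(x,y,z)=(u,v,y^u)$ produces $r>0$ with $F^*_{uv}(r)\le F^*_{y^uv}(s)$; the hypothesis $F^*_{uv}(r)=1$ then forces $F^*_{y^uv}(s)=1$, valid for every $s$. Swapping coordinates via $F^*_{y^uv}=F^*_{vy^u}$ and applying the lemma again to $(v,y^u,y^v)$ gives $F^*_{vy^u}(r')\le F^*_{y^vy^u}(s)$, whence $F^*_{y^uy^v}(s)=F^*_{y^vy^u}(s)=1$ for all $s$. The reverse implication is the same argument with $u,v$ and $y^u,y^v$ exchanged, which is legitimate by the symmetry of the relation noted above.

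Items~2 and~3 use the $F$--inequality of Lemma~\ref{salam75} and, crucially, the fact that the radius $r$ it returns depends only on the input $s$, not on the points. Given $t_0$ with $F_{xz}(t_0)=0$ (resp.\ $<1$) for all distinct $x,z\in D$, the lemma first yields a single $r=r(t_0)$ with $F_{y^xz}(r)\le F_{xz}(t_0)$, and then a single $r'=r'(r)$ with $F_{y^xy^z}(r')=F_{y^zy^x}(r')\le F_{zy^x}(r)=F_{y^xz}(r)$. Since $r'$ is the same for every pair, it serves as the required uniform threshold for $\{y^x:x\in D\}$, and the inequalities preserve both ``$=0$'' and ``$<1$'' thanks to monotonicity of $F$ in its radius argument. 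The converse is again symmetric.

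Finally, the ``in particular'' clause is just bookkeeping: $D$ is distributional scrambled of type~$1$ exactly when, for each pair of distinct $x,z\in D$, the condition of item~1 (applied to $u=x$, $v=z$) and the condition of item~2 (applied to the two--element set $\{x,z\}$) both hold, and type~$2$ replaces item~2 by item~3; note that any collision $y^x=y^z$ for distinct $x,z$ is automatically excluded on the scrambled side, since it would make $F_{y^xy^z}(s)=1$. I do not anticipate a genuine obstacle, as Lemma~\ref{salam75} supplies all the analytic content. The only points demanding care are orienting each application of that lemma correctly---tracking which point carries the small radius $r$ and which carries $s$---and invoking the uniform dependence $r=r(s)$, so that a single threshold survives the quantifier over all pairs of $D$ in items~2 and~3.
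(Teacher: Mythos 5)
Your proposal is correct and follows essentially the same route as the paper: both deduce the corollary by applying Lemma~\ref{salam75} twice (once per coordinate, using the symmetry $F_{xy}=F_{yx}$ and the symmetry of the finite difference set), chaining the radii $r_2\le r_1\le s$, and exploiting that the lemma's $r$ depends only on $s$ so a single threshold works uniformly over all pairs in $D$. Your explicit remarks on the converse directions and on possible collisions $y^x=y^z$ are points the paper leaves implicit, but the underlying argument is the same.
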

\begin{proof}
{\bf 1.} Suppose for all $t>0$ we have $F_{uv}^*(t)=1$, and consider $s>0$,
then by Lemma~\ref{salam75} there exist $r_1,r_2>0$ with $F^*_{uv}(r_2)\leq F^*_{y^uv}(r_1)\leq F^*_{y^uy^v}(s)$.
Using $1=F^*_{uv}(r_2)\leq F^*_{y^uy^v}(s)\leq1$ we have $F^*_{y^uy^v}(s)=1$. Thus for
all $s>0$, $F^*_{y^uy^v}(s)=1$ is valid.
\vspace{3mm}
\\
{\bf 2.} Suppose there exists $s>0$ with $F_{xz}(s)=0$ for all distinct $x,z\in D$, then by
Lemma~\ref{salam75}, there exist $r_1,r_2>0$
such that $F_{y^xy^z}(r_2)\leq F_{y^xz}(r_1)\leq F_{xz}(s)$ for all distinct $x,y\in D$. Using $0\leq F_{y^xy^z}(r_2)\leq F_{xz}(s)=0$
we have $F_{y^xy^z}(r_2)=0$ for all distinct $x,z\in D$.
\vspace{3mm}
\\
{\bf 3.}  Use a similar method described in (2).
\end{proof}
\begin{theorem}\label{salam90}
$(X^\Gamma,\sigma_\varphi)$ is dense distributional chaotic if and only if
$\varphi:\Gamma\to\Gamma$ does not have any periodic point.
\end{theorem}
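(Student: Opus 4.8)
The backward implication is already settled by Lemma~\ref{salam70}, which shows that dense distributional chaos forces $\varphi$ to have no periodic point. So the plan is to prove the forward implication: if $\varphi$ has no periodic point, then $(X^\Gamma,\sigma_\varphi)$ is dense distributional chaotic.

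First I would record the key reduction. By definition a non-quasi-periodic point is one whose forward orbit is disjoint from $Per(\varphi)$; since $Per(\varphi)=\varnothing$, this disjointness holds vacuously for every $\alpha\in\Gamma$, so \emph{every} point of $\Gamma$ is non-quasi-periodic. In particular $\varphi$ has a non-quasi-periodic point, so Lemma~\ref{salam50} supplies an uncountable distributional scrambled set $A$ of type $1$ which is uniform, i.e.\ (via Corollary~\ref{salam37}) there is a single constant $\varepsilon>0$ witnessing $F_{xy}(\varepsilon)=0$ for all distinct $x,y\in A$, together with $F^*_{xy}(s)=1$ for all $s>0$. What is missing is density, and the crucial leverage is that, because every coordinate of $\Gamma$ is non-quasi-periodic, Corollary~\ref{salam80} lets us perturb the points of a uniform DC$^u1$ set on \emph{arbitrary} finite coordinate sets without destroying the (uniform) type-$1$ scrambled structure.

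Next I would exploit second countability of $X^\Gamma$: since $\Gamma$ is countable and $X$ is finite, the basic open sets obtained by prescribing the values on a finite subset of $\Gamma$ form a countable base $\{U_m\}_{m\geq1}$, where $U_m$ fixes the coordinates on some finite set $F_m\subseteq\Gamma$. Picking a countable family of distinct elements $a_m\in A$, I would define a perturbation $y^{x}$ for each $x\in A$ by letting $y^{a_m}$ agree with $a_m$ off $F_m$ and take the prescribed values of $U_m$ on $F_m$ (so that $y^{a_m}\in U_m$), and setting $y^{x}=x$ for every remaining $x\in A$. Each perturbation alters only finitely many coordinates, all of which are non-quasi-periodic, so Corollary~\ref{salam80} applies and $B=\{y^{x}:x\in A\}$ is again an uncountable uniform distributional scrambled set of type $1$; it stays uncountable because only countably many elements of the uncountable set $A$ were moved. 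By construction $B$ meets every $U_m$, hence $B$ is dense, and it is exactly the set required by the definition of dense distributional chaos.

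The only genuine difficulty is reconciling density (a countable requirement) with the quantitatively fragile uniform type-$1$ condition, and this is precisely what Corollary~\ref{salam80} packages: the perturbation lemma transports the equalities $F_{xy}(\varepsilon)=0$ and $F^*_{xy}(s)=1$ across finite coordinate modifications, while the no-periodic-point hypothesis guarantees that \emph{every} such modification is admissible. The remaining work is the routine bookkeeping of enumerating the base, selecting distinct representatives from the uncountable index set, and verifying that the perturbed set remains uncountable and dense.
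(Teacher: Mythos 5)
Your proposal is correct and follows essentially the same route as the paper: Lemma~\ref{salam70} for the forward direction, then uniform distributional chaos from Theorem~\ref{salam60}, a countable enumeration of the basic open sets determined by finite coordinate patterns, perturbation of a countable one-to-one subfamily of the uncountable scrambled set to hit every basic open set, and Corollary~\ref{salam80} (justified by the absence of quasi-periodic points) to preserve the uniform type-1 scrambled structure. No substantive differences to report.
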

\begin{proof}
By Lemma~\ref{salam70}, if $(X^\Gamma,\sigma_\varphi)$ is dense distributional chaotic,
then $\varphi:\Gamma\to\Gamma$ does not have any periodic point.
Now suppose $\varphi:\Gamma\to\Gamma$ does not have any periodic point, then
$(X^\Gamma,\sigma_\varphi)$ is uniform distributional chaotic
by Theorem~\ref{salam60}. Consider uncountable subset $D$ of $X^\Gamma$
such that there exists $\varepsilon>0$ with 
\linebreak
$F_{xy}^*(t)=1$ and $F_{xy}(\varepsilon)=0$
for all $t>0$ and distinct $x,y\in D$. Since $\Gamma$ is countable,
\linebreak
${\mathcal P}_\textrm{fin}(\Gamma):=\{A\subseteq\Gamma:A$ is finite$\}$ is countable too,
and using finiteness of $X$, for all nonempty finite subset $A$ of
$\Gamma$, $X^A$ is finite too.
Thus 
\[\mathcal{J}:=\bigcup\{X^A:A\in{\mathcal P}_{\textrm{fin}}(\Gamma)
\setminus\{\varnothing\}\}\]
is countable. Moreover
$\Gamma$ is infinite, since $\Gamma\neq\varnothing$ and
$\varphi:\Gamma\to\Gamma$ does not have any periodic point.
So $\mathcal{J}$ is infinite countable set (note that $\# X\geq2$),
and there exists a bijection $\zeta:{\mathbb N}\to\mathcal{J}$.
Consider a one-to-one sequence $\{u_n\}_{n\geq1}$ in $D$ and
for all $n\geq1$ consider $v_n\in X^\Gamma$ with
(let $u_n=(u^n_\alpha)_{\alpha\in\Gamma}$ and
$v_n=(v^n_\alpha)_{\alpha\in\Gamma}$):
\[v^n_{\alpha}:=\left\{\begin{array}{lc}
	u^n_{\alpha} & \zeta(n)\in X^A\wedge\alpha\notin A\:, \\
	x_\alpha & \zeta(n)=(x_\beta)_{\beta\in A}\in X^A\wedge\alpha\in A\:. \end{array}\right.\]
Let $D_0:=(D\setminus\{u_n:n\geq1\})\cap\{v_n:n\geq1\}$. We have the following two claims:
\vspace{3mm}
\\
{\bf Claim 1.} $\{v_n:n\geq1\}$ and hence $D_0$ are dense subsets of $X^\Gamma$.
\vspace{3mm}
\\
{\it Proof of Claim 1.} If $x=(x_\alpha)_{\alpha\in\Gamma}\in X^\Gamma$ and $V$
is an open neighbourhood of $x$, then there exists $A\in{\mathcal P}_{\textrm{fin}}(\Gamma)
\setminus\{\varnothing\}$ such that ${\displaystyle\prod_{\alpha\in\Gamma}V_\alpha}
\subseteq V$, where $V_\alpha=\{x_\alpha\}$ for $\alpha\in A$ and $V_\alpha=X$
for $\alpha\in \Gamma\setminus A$, so $(x_\alpha)_{\alpha\in A}\in{\mathcal J}$.
Let $m:=\zeta^{-1}((x_\alpha)_{\alpha\in A})$, then
$v_m\in {\displaystyle\prod_{\alpha\in\Gamma}V_\alpha}
\subseteq V$ which completes the proof of Claim 1.
\vspace{3mm}
\\
{\bf Claim 2.} There exists $\lambda>0$ such that for all distinct $z,w\in D_0$:
\begin{itemize}
\item $\forall t>0\: \: \: \: \: F^*_{zw}(t)=1$,
\item $F_{zw}(\lambda)=0$
\end{itemize}
{\it Proof of Claim 2.} In Corollary~\ref{salam80} for $x\in X^\Gamma$
let $y^{u_n}=v_n$ and $y^x=x$ for $x\neq u_1,u_2,\ldots$, now use the fact that $\varphi$ does not have any
quasi-periodic point
\vspace{3mm}
\\
Using claims 1, 2, and uncountablity of $D$, shows that
$(X^\Gamma,\sigma_\varphi)$ is dense distributional chaotic.
\end{proof}
\section{Transitive distributional chaotic generalized shifts}
\noindent In this section we prove $(X^\Gamma,\sigma_\varphi)$ is transitive distributional chaotic if and only if
$\varphi:\Gamma\to\Gamma$ is one-to-one without any periodic point.
\begin{remark}\label{salam100}
$(X^\Gamma,\sigma_\varphi)$ is transitive if and only if
$\varphi:\Gamma\to\Gamma$ is one-to-one without any periodic point
\cite{dev, taher}.
\end{remark}
\begin{lemma}\label{salam110}
If
$\varphi:\Gamma\to\Gamma$ is one-to-one without any periodic point then
$X^\Gamma$ has an uncountable subset of transitive points like $M$ and there
exists $r>0$ such that for all distinct $x,y\in M$ we have:
\begin{itemize}
\item $\forall t>0\: \: \: \: \: F_{xy}^*(t)=1$,
\item $F_{xy}(r)=0$.
\end{itemize}
\end{lemma}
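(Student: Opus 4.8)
The plan is to adapt the block construction of Lemma~\ref{salam50}, but to engineer each member of the scrambled family so that it is a transitive point, by reserving part of the dynamics for a universal ``scaffold'' that realises every finite pattern. First I would record the consequences of the hypothesis: since $\varphi$ is one--to--one with $Per(\varphi)=\varnothing$, the set $\Gamma$ is infinite and every forward orbit $\{\varphi^n(\beta):n\geq0\}$ is a one--to--one sequence (otherwise $\varphi^i(\beta)=\varphi^j(\beta)$ with $i<j$ would force, by injectivity, $\varphi^{j-i}(\beta)=\beta$, a periodic point); in particular every point of $\Gamma$ is non--quasi--periodic, so Lemma~\ref{salam50} applies and, by Remark~\ref{salam100}, $(X^\Gamma,\sigma_\varphi)$ is transitive. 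As in the proof of Theorem~\ref{salam90} I would enumerate the countable set $\mathcal J=\bigcup\{X^A:A\in\mathcal P_{\rm fin}(\Gamma)\setminus\{\varnothing\}\}$ of finite patterns, using the standard characterisation that $x\in X^\Gamma$ is transitive if and only if for every pattern $\zeta(m)\in X^{A_m}$ there is a time $n$ with $x_{\varphi^n(\alpha)}=\zeta(m)_\alpha$ for all $\alpha\in A_m$.

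Next I would fix $\theta$ and its (injective) forward orbit $\theta,\varphi(\theta),\varphi^2(\theta),\dots$, together with an increasing sequence $\{s_n\}_{n\geq1}$ satisfying $\frac{s_n}{s_1+\cdots+s_n}>\frac{n-1}{n}$, exactly as in Lemma~\ref{salam50}, and a distinguished pair $p\neq q$ in $X$. I would split the blocks along the orbit of $\theta$ into two roles: the even blocks serve as a \emph{scaffold} on which I successively realise the patterns $\zeta(1),\zeta(2),\dots$, while the odd blocks are long constant \emph{markers}. Using Remark~\ref{salam40} I would take an almost--disjoint family $\mathcal K$ of infinite subsets of $\mathbb N$ with pairwise infinite symmetric differences (as arranged at the end of the proof of Lemma~\ref{salam50}), and for each $C\in\mathcal K$ define $x^C$ so that its odd markers are filled with $p$ or $q$ according to membership of the block index in $C$, its even (scaffold) blocks are the \emph{same} for every $C$ and realise all patterns, and its remaining coordinates carry the realisations of those patterns $\zeta(m)$ whose windows meet $\Gamma\setminus\{\varphi^n(\theta):n\geq0\}$. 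The crucial point is to choose the realisation times sparse (of upper density $0$) and with pairwise disjoint coordinate footprints $\{\varphi^{t_m}(\alpha):\alpha\in A_m\}$; this is possible precisely because every forward orbit is injective, so each footprint eventually escapes any prescribed finite set and successive footprints can be separated.

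To finish I would verify the two required properties for the uncountable set $M:=\{x^C:C\in\mathcal K\}$. Transitivity of each $x^C$ is immediate from the scaffold: every pattern $\zeta(m)$ is realised at its reserved time, and disjointness of footprints guarantees that no later realisation overwrites an earlier one, so the forward orbit of $x^C$ is dense. For the distributional conditions I would repeat the two density estimates of Lemma~\ref{salam50}. The long common scaffold blocks, together with the growth condition on $\{s_n\}$, give $\limsup$ of the agreement density equal to $1$, hence $G^*_{x^Cx^D}(\gamma_D)=1$ for every finite $D$ and, by Lemma~\ref{salam35}, $F^*_{x^Cx^D}(t)=1$ for all $t>0$. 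Choosing $r>0$ with $\alpha_r\subseteq\gamma_{\{\theta\}}$, the marker blocks on which $C$ and $D$ disagree---infinitely many, by the infinite symmetric difference---produce, via the same growth condition, $\liminf$ of the agreement density at the single coordinate $\theta$ equal to $0$, so $F_{x^Cx^D}(r)=0$; this $r$ is the same for every pair, which is what makes the scrambling uniform.

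The main obstacle is reconciling transitivity, an ``all windows at once'' requirement, with the block scrambling, for an uncountable family that must share a single scaffold and a single constant $r$. The device that resolves it is the separation of time--scales: the pattern realisations needed for transitivity are confined to an upper--density--zero set of times with pairwise disjoint footprints, so they neither conflict with one another nor perturb the density computations yielding $F^*=1$ and $F=0$, while the positive--density marker blocks along the orbit of $\theta$ carry all the scrambling and, because the enforced disagreement always occurs at the fixed coordinate $\theta$, a single $r$ with $\alpha_r\subseteq\gamma_{\{\theta\}}$ serves the whole family. The delicate verifications are the disjointness of footprints (where injectivity of $\varphi$ and the absence of periodic points are used essentially) and the fact that overwriting the scaffold and markers changes each $x^C$ on a set that does not damage the liminf/limsup estimates; once these are in place the statement follows.
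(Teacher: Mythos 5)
Your proposal is correct in outline and shares the paper's overall architecture: an almost--disjoint family from Remark~\ref{salam40} indexes marker blocks of lengths $s_n$ along the orbit of a fixed $\theta$, those blocks carry the scrambling exactly as in Lemma~\ref{salam50}, and the data needed for transitivity is confined to a set of times that is negligible for the $\liminf$/$\limsup$ computations. Where you genuinely differ is the mechanism for transitivity. The paper fixes a set $\Lambda$ of orbit representatives with $\bigcup\{\varphi^i(\Lambda):i\in{\mathbb Z}\}=\Gamma$, picks one transitive point $(t_\alpha)_{\alpha\in\Gamma}$ (available by Remark~\ref{salam100}), and splices between consecutive marker blocks the increasingly long segments $t_\theta,t_{\varphi(\theta)},\ldots,t_{\varphi^{n-1}(\theta)}$, synchronously on every orbit; transitivity of $x^A$ then follows because $\sigma_\varphi^{l+h}(x^A)$ reproduces $\sigma_\varphi^h((t_\alpha)_{\alpha\in\Gamma})$ on arbitrarily large windows. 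You instead enumerate the countable set of finite patterns and realize each one once, at a reserved time $t_m$ with footprint $\varphi^{t_m}(A_m)$, using injectivity of $\varphi$ and the absence of periodic points to make the footprints pairwise disjoint and sparse along the orbit of $\theta$. Both work: yours avoids choosing $\Lambda$ and the cross--orbit synchronization, at the price of the footprint bookkeeping; the paper's reduces everything to one known transitive point and needs only the correction term $n(n-1)/2$ in the growth condition on $\{s_n\}$. One internal slip you should repair: you cannot simultaneously take the realization times to have upper density zero and derive $F^*_{x^Cx^D}(t)=1$ from ``the long common scaffold blocks'' --- a density--zero scaffold contributes nothing to the $\limsup$. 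With a sparse scaffold the $F^*=1$ estimate must come, as in Claim 1 of Lemma~\ref{salam50}, from the marker blocks indexed by the infinite set $C\cap D$ (which your family, built as $A\cup2{\mathbb N}$, does provide); once that is the stated source of the $\limsup=1$ estimate, the argument closes.
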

\begin{proof}
Suppose $\varphi:\Gamma\to\Gamma$ is one-to-one without any periodic point,
by Theorem~\ref{salam90},
$(X^\Gamma,\sigma_\varphi)$ is uniform distributional chaotic
so there exists $\varepsilon>0$ and uncountable subset $S$ of $X^\Gamma$ such that
for all distinct $x,y\in S$ we have $F^*_{xy}(t)=1$ (for all $t>0$) and
$F_{xy}(\varepsilon)=0$. Choose transitive point $(t_\alpha)_{\alpha\in\Gamma}$
in $X^\Gamma$, choose distinct $p,q\in X$, an strictly increasing sequence
$\{s_n\}_{n\geq1}$ such that $\dfrac{s_n}{s_1+\cdots+s_n+n(n-1)/2}>\dfrac{n-1}{n}$
for all $n\geq1$. Choose $\Lambda\subseteq\Gamma$ such that
$\bigcup\{\varphi^i(\Lambda):i\in{\mathbb Z}\}=\Gamma$ and
for all distinct $n,m\geq1$ and distinct $\alpha,\beta\in\Lambda$ we have
$\varphi^n(\alpha)\neq\varphi^m(\beta)$.
For all $\theta\in\Lambda$ and nonempty subset $A$ of
${\mathbb N}$ let:
\vspace{3mm}
\\
$(x_\theta^A,x_{\varphi(\theta)}^A,x_{\varphi^2(\theta)}^A,\cdots)$
\[=(\underbrace{z_1^A,\cdots,z_1^A}_{s_1\textrm{ times}},t_\theta,
\underbrace{z_2^A,\cdots,z_2^A}_{s_2\textrm{ times}},t_\theta,t_{\varphi(\theta)},
\underbrace{z_3^A,\cdots,z_3^A}_{s_3\textrm{ times}},t_\theta,t_{\varphi(\theta)}
,t_{\varphi^2(\theta)},\underbrace{z_1^A,\cdots,z_1^A}_{s_4\textrm{ times}},\cdots) \]
\vspace{3mm}
where:
\[z_i^A=\left\{\begin{array}{lc} p & i\in A\: , \\
	q & \textrm{ otherwise\:}. \end{array}\right.\]
Also suppose $x_\beta^A=q$ for
$\beta\in\Gamma\setminus\{\varphi^n(\theta):n\geq0,\theta\in\Lambda\}$.
Now for $x^A:=(x^A_\alpha)_{\alpha\in\Gamma}$ we have the following claims.
\vspace{3mm}
\\
{\bf Claim 1.} For nonempty subset $A$ of
${\mathbb N}$, $x^A$ is a transitive point of $\sigma_\varphi:X^\Gamma\to X^\Gamma$.
\vspace{3mm}
\\
{\it Proof of Claim 1}. Suppose $U$ is a nonempty open
neighbourhood of $(u_\alpha)_{\alpha\in\Gamma}(\in X^\Gamma)$,
there exist $\alpha_1,\ldots,\alpha_m\in\Gamma$ such that ${\displaystyle\prod_{\alpha\in\Gamma}
U_\alpha}\subseteq U$, where $U_\alpha=\{u_\alpha\}$ for $\alpha\in\{\alpha_1,\ldots,\alpha_m\}$
and $U_\alpha=X$ otherwise. There exist distinct $\theta_1,\ldots,\theta_k\in\Lambda$ and $N\geq1$
such that $\alpha_1,\ldots,\alpha_m\in\bigcup\{\varphi^i\{\theta_1,\ldots,\theta_k\}:|i|\leq N\}$.
For all $\alpha\in\bigcup\{\varphi^i\{\theta_1,\ldots,\theta_k\}:|i|\leq N\}$ let $V_\alpha=\{u_\alpha\}$
and $V_\alpha=X$ for $\alpha\notin\bigcup\{\varphi^i\{\theta_1,\ldots,\theta_k\}:|i|\leq N\}$.
Since $(t_\alpha)_{\alpha\in\Gamma}$ is a transitive point of $\sigma_\varphi$,
there exists $h\geq0$ such that $\sigma_\varphi^h((t_\alpha)_{\alpha\in\Gamma})\in
{\displaystyle\prod_{\alpha\in\Gamma}V_\alpha}$.
Moreover since the Hausdorff space $X^\Gamma$ does not have any isolated point, we may consider $h$
arbitrary large, so suppose $h>N$.
Also for all $j\in\{1,\ldots, k\}$ and $i\in\{-N,\ldots,N\}$ with $\varphi^i(\theta_j)\neq
\varnothing$, we have 
\[u_{\varphi^i(\theta_j)}=t_{\varphi^{h+i}(\theta_j)}\:.\]
Let $l=s_1+1+s_2+2+s_3+3+\cdots+s_{h+N+1}$, then:
\[\forall m\in\{0,\ldots,h+N\}\:\:\forall j\:\: (x^A_{\varphi^{l+m}(\theta_j)}=t_{\varphi^m(\theta_j)})\:.\]
Therefore for all
$j\in\{1,\ldots, k\}$ and $i\in\{-N,\ldots,N\}$ with $\varphi^i(\theta_j)\neq
\varnothing$ we have 
$x^A_{\varphi^{l+h+i}(\theta_j)}
=t_{\varphi^{h+i}(\theta_j)}=u_{\varphi^i(\theta_j)}$, which shows 
\[\forall\alpha\in\bigcup\{\varphi^i\{\theta_1,\ldots,\theta_k\}:|i|\leq N\}\:\:x^A_{\varphi^{l+h}(\alpha)}=u_\alpha\]
which leads to
$\sigma_\varphi^{l+h}(x^A)\in{\displaystyle\prod_{\alpha\in\Gamma}V_\alpha}
\subseteq {\displaystyle\prod_{\alpha\in\Gamma}U_\alpha}\subseteq U$
and $\{\sigma_\varphi^i(x^A):i\geq0\}$ is a dense subset of $X^\Gamma$, i.e.,
$x^A$ is a transitive point of $\sigma_\varphi$.
\vspace{3mm}
\\
{\bf Claim 2}. For nonempty subsets $A, B$ of
${\mathbb N}$, if $A\cap B$ is infinite, then
$F_{x^Ax^B}^*(t)=1$ for all $t>0$.
\vspace{3mm}
\\
{\it Proof of Claim 2}. Consider finite subset $E$ of $\Gamma$, there exists
$N\geq1$ with $E\subseteq\bigcup\{\varphi^i(\theta):-N\leq i\leq N,\theta\in\Lambda\}$.
For all $m\in (A\cap B)
\setminus\{0,1,\ldots,2N\}$ we have $s_m\geq m>2N$ and
for all $j,k$ with
$s_1+\cdots+s_{m-1}+m(m-1)/2+N<j<s_1+\cdots+s_m+m(m-1)/2-N$ and $0\leq k\leq 2N$ we have
$x^A_{\varphi^{(j+k-N)}(\theta)}=p=x^B_{\varphi^{(j+k-N)}(\theta)}$ for all
$\theta\in\Lambda$.
Thus for all $\alpha\in
\bigcup\{\varphi^i(\theta):-N\leq i\leq N,\theta\in\Lambda\}$
and $s_1+\cdots+s_{m-1}+m(m-1)/2+N<j<s_1+\cdots+s_m+m(m-1)/2-N$
we have
$x^A_{\varphi^j(\alpha)}=x^B_{\varphi^j(\alpha)}$, which leads to
$(\sigma_\varphi^j(x^A),\sigma_\varphi^j(x^B))\in\gamma_E$.
Therefore:
\\
$\#\{i\in\{0,\ldots,s_1+\cdots+s_m+m(m-1)/2-1\}:(\sigma_\varphi^i(x^A),\sigma_\varphi^i(x^B))
	\in \gamma_E\}
\geq$
\[s_1+\cdots+s_m+m(m-1)/2-N-(s_1+\cdots+s_{m-1}+m(m-1)/2+N)-1=s_m-2N-1\]
Hence:
\\
$\mathop{\limsup}\limits_{n\to\infty}\dfrac{\#\{i\in\{0,\ldots,n-1\}:(\sigma_\varphi^i(x^A),		
	\sigma_\varphi^i(x^B))\in \gamma_E\}}{n}$
\begin{eqnarray*}
& \geq & \mathop{\limsup}\limits_{m\to\infty, m\in A\cap B}\dfrac{\#\{i\in\{0,\ldots,s_1+\cdots+s_m+m(m-1)/2-1\}:(\sigma_\varphi^i(x^A),\sigma_\varphi^i(x^B))
	\in \gamma_E\}}{s_1+\cdots+s_m+m(m-1)/2} \\
& \geq &  \mathop{\limsup}\limits_{m\to\infty, m\in A\cap B}\dfrac{s_m-2N-1}{s_1+\cdots+s_m+m(m-1)/2} \\
& \geq &  \mathop{\limsup}\limits_{m\to\infty, m\in A\cap B}(\dfrac{m-1}{m}-\dfrac{2N+1}{s_1+\cdots+s_m+m(m-1)/2})=1
\end{eqnarray*}
Thus for all finite subset $K$ of $\Gamma$ we have
\[G^*_{xAy^A}(\gamma_D)=\mathop{\limsup}\limits_{n\to\infty}\dfrac{\#\{i\in\{0,\ldots,n-1\}:(\sigma_\varphi^i(x^A),		
	\sigma_\varphi^i(x^B))\in \gamma_K\}}{n}=1\:.\]
By Lemma~\ref{salam35} for all $t>0$ we have $F^*_{x^Ax^B}(t)=1$.
\vspace{3mm}
\\
{\bf Claim 3}. There exists $t>0$ such that
for two nonempty subsets $A, B$
of ${\mathbb N}$ if $A\setminus B$ is infinite, then
$F_{x^Ax^B}(t)=0$.
\vspace{3mm}
\\
{\it Proof of Claim 3}. Consider $\theta\in\Lambda$, there exists $t>0$ with
$\{(z,w)\in X^\Gamma\times X^\Gamma:d(z,w)<t\}\subseteq\gamma_{\{\theta\}}$.
For $m\in A\setminus B$
and $s_1+\cdots+s_{m-1}+m(m-1)/2<i<s_1+\cdots+s_m+m(m-1)/2$ we have
$p=x_{\varphi^i(\theta)}^A\neq x_{\varphi^i(\theta)}^B=q$, so
and $(\sigma_\varphi^i(x^A),\sigma_\varphi^i(x^B))\notin\gamma_{\{\theta\}}$. Using
a similar method described in the proof of Claim 2 in
Lemma~\ref{salam50} we have $F_{x^Ax^B}(t)=0$.
\\
Now we are ready to complete the proof. Using Remark~\ref{salam40}
there exists uncountable family $\mathcal K$ of infinite subsets of ${\mathbb N}\setminus
2{\mathbb N}$ such that
for all $C,D\in\mathcal{K}$, $C\cap D$ is finite (thus $C\setminus D$ is infinite for all distinct $C,D\in{\mathcal K}$).
The set $\{x^{A\cup 2{\mathbb N}}:A\in{\mathcal K}\}$ is our desired uncountable subset of $X^\Gamma$.
\end{proof}
\begin{note}\label{salam120}
If $\varphi:\Gamma\to\Gamma$ is one-to-one without any periodic point and
$x=(x_\alpha)_{\alpha\in\Gamma}$, $y=(y_\alpha)_{\alpha\in\Gamma}$ are two points of
$X^\Gamma$ such that $\{\alpha\in\Gamma:x_\alpha\neq y_\alpha\}$ is finite, then
$x$ is a transitive point of $\sigma_\varphi:X^\Gamma\to X^\Gamma$ if and only if
$y$ is a transitive point of $\sigma_\varphi:X^\Gamma\to X^\Gamma$.
\end{note}
\begin{theorem}\label{salam130}
$(X^\Gamma,\sigma_\varphi)$ is transitive distributional chaotic if and only if
$\varphi:\Gamma\to\Gamma$ is one-to-one without any periodic point.
\end{theorem}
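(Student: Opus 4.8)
The plan is to prove Theorem~\ref{salam130} as a biconditional, combining the transitivity characterization of Remark~\ref{salam100} with the quantitative distributional-chaos data from Lemma~\ref{salam110}. For the forward direction, suppose $(X^\Gamma,\sigma_\varphi)$ is transitive distributional chaotic. By definition this requires $(X^\Gamma,\sigma_\varphi)$ to be transitive (since the dense scrambled set consists of transitive points, which forces point transitivity, hence topological transitivity on the perfect compact space $X^\Gamma$). Then Remark~\ref{salam100} immediately yields that $\varphi:\Gamma\to\Gamma$ is one-to-one without any periodic point. This direction should be essentially a one-line invocation of the definitions together with the cited characterization of transitivity.

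For the converse, assume $\varphi:\Gamma\to\Gamma$ is one-to-one without any periodic point. The heavy lifting is already done in Lemma~\ref{salam110}, which produces an uncountable set $M\subseteq X^\Gamma$ consisting of transitive points together with a uniform $r>0$ such that every distinct pair $x,y\in M$ satisfies $F^*_{xy}(t)=1$ for all $t>0$ and $F_{xy}(r)=0$. By the definition of distributional scrambled sets of type~1, $M$ is an uncountable distributional scrambled set of type~1 witnessing the uniform condition (via the single $\varepsilon=r$). Thus $(X^\Gamma,\sigma_\varphi)$ is uniform distributional chaotic. What remains is to upgrade from \emph{uniform} to \emph{transitive} distributional chaos, which additionally requires the scrambled set to be \emph{dense} and to consist of transitive points.

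The set $M$ from Lemma~\ref{salam110} already consists of transitive points, so the one missing ingredient is density, and I expect this to be the main (though routine) obstacle. The natural remedy is to replace $M$ by a dense modification using the same technique as in the proof of Theorem~\ref{salam90}: enumerate the countable set $\mathcal J=\bigcup\{X^A:A\in{\mathcal P}_{\rm fin}(\Gamma)\setminus\{\varnothing\}\}$ by a bijection $\zeta:{\mathbb N}\to\mathcal J$, pick a one-to-one sequence $\{u_n\}_{n\geq1}$ in $M$, and define modified points $v_n$ agreeing with $u_n$ off a finite set and prescribing the $\zeta(n)$-pattern on that finite set, so that $\{v_n:n\geq1\}$ is dense. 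Here the key point is that the finitely many coordinates changed are non-quasi-periodic points of $\varphi$ (automatic since $\varphi$ has no periodic points, hence no quasi-periodic points at all), so Corollary~\ref{salam80} guarantees that the distributional scrambled type-1 conditions are preserved under this finite modification, and Note~\ref{salam120} guarantees that each $v_n$ remains a transitive point. Passing to the uncountable dense subset $D_0:=(M\setminus\{u_n:n\geq1\})\cup\{v_n:n\geq1\}$ of transitive points, uncountable because $M$ is, then yields a dense uncountable distributional scrambled set of type~1 consisting of transitive points with the uniform $\varepsilon$-condition. This exhibits $(X^\Gamma,\sigma_\varphi)$ as transitive distributional chaotic and completes the proof.
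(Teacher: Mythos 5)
Your proposal is correct and follows essentially the same route as the paper: the forward direction is the one-line appeal to Remark~\ref{salam100}, and the converse combines Lemma~\ref{salam110} with the density-upgrading finite-modification argument of Theorem~\ref{salam90} (via Corollary~\ref{salam80}) together with Note~\ref{salam120} to keep the modified points transitive. Your write-up is in fact more explicit than the paper's (which only sketches this combination), and you correctly use a union where the paper's Theorem~\ref{salam90} has an apparent typo ($\cap$ for $\cup$) in the definition of $D_0$.
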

\begin{proof}
If $(X^\Gamma,\sigma_\varphi)$ is transitive distributional chaotic, then
by Remark~\ref{salam100},
\linebreak
$\varphi:\Gamma\to\Gamma$ is one--to--one without any periodic point.
In order to complete the proof use Lemma~\ref{salam110}, Note~\ref{salam120} and
a similar method described in Theorem~\ref{salam90}.
\end{proof}
\section{Counterexamples}
\noindent Using Theorems~\ref{salam60},~\ref{salam90} and~\ref{salam130}
we have the following diagram:
\[\xymatrix{
(X^\Gamma,\sigma_\varphi)\textrm{  is  transitive  distributional  chaotic}
\ar@{=>}[d] \\
(X^\Gamma,\sigma_\varphi)\textrm{ is dense distributional chaotic}
\ar@{=>}[d] \\
(X^\Gamma,\sigma_\varphi)\textrm{ is uniform distributional 
chaotic} }
\]
Now suppose $\Gamma$ is infinite so we may suppose 
$\Gamma=\{\theta_n:n\in{\mathbb Z}\}$ with distinct $\theta_n$s.
Define $\varphi_1,\varphi_2,\varphi_3:\Gamma\to\Gamma$ with
$\varphi_1(\theta_n)=\theta_{n+1}$, $\varphi_2(\theta_n)=\theta_{n^2+1}$,
$\varphi_3(\theta_n)=\theta_{n^2}$ for $n\in{\mathbb Z}$.
Then: 
\vspace{3mm}
\\
$\bullet$ $(X^\Gamma,\sigma_{\varphi_1})$ is transitive distributional chaotic,
\vspace{3mm}
\\
$\bullet$ $(X^\Gamma,\sigma_{\varphi_2})$ is dense distributional chaotic
	and it is not transitive distributional chaotic,
\vspace{3mm}
\\
$\bullet$ $(X^\Gamma,\sigma_{\varphi_3})$ is uniform distributional chaotic
	and it is not dense distributional chaotic.
\subsubsection*{Distributional chaos and product of the generalized shifts} Consider nonempty countable sets
$\Lambda,\Upsilon$ and self--maps $\lambda:\Lambda\to\Lambda$
and $\mu:\Upsilon\to\Upsilon$, consider
\[\sigma_\lambda\times\sigma_\mu:
\mathop{X^\Lambda\times X^\Upsilon\to 
X^\Lambda\times X^\Upsilon}\limits_{\: \: \: \: \: \: \: \: \: \: \: \: \: \: \: (x,y)\mapsto(
\sigma_\lambda(x),\sigma_\mu(y))}\:,\]
then we have
(use the fact that $\sigma_\lambda\times\sigma_\mu:
X^\Lambda\times X^\Upsilon\to X^\Lambda\times X^\Upsilon$ 
is just $\sigma_{\lambda\sqcup\mu}:
X^{\Lambda\sqcup\Upsilon}\to X^{\Lambda\sqcup\Upsilon}$):
\vspace{3mm}
\\
$\bullet$ $(X^\Lambda\times X^\Upsilon,\sigma_\lambda\times\sigma_\mu)$
	is uniform distributional chaotic if and only if  
	$(X^\Lambda,\sigma_\lambda)$ or $(X^\Upsilon,\sigma_\mu)$
	is uniform distributional chaotic,
\vspace{3mm}
\\
$\bullet$ $(X^\Lambda\times X^\Upsilon,\sigma_\lambda\times\sigma_\mu)$
	is dense distributional chaotic if and only if  
	$(X^\Lambda,\sigma_\lambda)$ and $(X^\Upsilon,\sigma_\mu)$
	are dense distributional chaotic,
\vspace{3mm}
\\
$\bullet$ $(X^\Lambda\times X^\Upsilon,\sigma_\lambda\times\sigma_\mu)$
	is transitive distributional chaotic if and only if  
	$(X^\Lambda,\sigma_\lambda)$ and $(X^\Upsilon,\sigma_\mu)$
	are transitive distributional chaotic.
\subsubsection*{Distributional chaos and composition of the generalized shifts} Note that for 
\linebreak
$\eta:\Gamma\to\Gamma$ we have $\sigma_\varphi\circ\sigma_\eta=\sigma_{\eta\circ\varphi}$,  now we have:
\vspace{3mm}
\\
$\bullet$ for 
$\mathop{\lambda:{\mathbb Z}\to{\mathbb Z}}\limits_{\: \: \: \: \: \: \: \: \: \: n\mapsto n+1}$,
$(X^{\mathbb Z},\sigma_\lambda)$ and 
$(X^{\mathbb Z},\sigma_{\lambda^{-1}})$ are
transitive (dense, uniform) distributional chaotic but
$(X^{\mathbb Z},\sigma_\lambda\circ\sigma_{\lambda^{-1}})$ is not 
transitive (dense, uniform) distributional chaotic.
\vspace{3mm}
\\
$\bullet$ for 
$\lambda,\mu:{\mathbb Z}\to{\mathbb Z}$ with
\[\lambda(n):=\left\{\begin{array}{lc} n+1 & n\textrm{ \: is  even\:}, \\
n-1 & n\textrm{ \: is odd\:}, \end{array}\right.\: \: \: \: \:
\mu(n):=\left\{\begin{array}{lc} n+1 & n\textrm{ \: is odd\:}, \\
n-1 & n\textrm{ \: is even \:}, \end{array}\right.\]
then neither $(X^{\mathbb Z},\sigma_\lambda)$ nor 
$(X^{\mathbb Z},\sigma_\mu)$ are
uniform (dense, transitive) distributional chaotic but both
$(X^{\mathbb Z},\sigma_\mu\circ\sigma_\lambda)$ and
$(X^{\mathbb Z},\sigma_\lambda\circ\sigma_\mu)$ are 
uniform (dense, transitive) distributional chaotic. 

{\bf Zahra Nili Ahmadabadi}, Science and Research Branch,  Islamic Azad University, Tehran, Iran, zahra.nili@srbiau.ac.ir

{\bf Fatemah Ayatollah Zadeh Shirazi}, Faculty of Mathematics, Statistics and Computer Science,
College of Science, University of Tehran,
Enghelab Ave., Tehran, Iran, fatemah@khayam.ut.ac.ir

\end{document}